\documentclass[11pt, twoside, leqno, french]{amsart}
\usepackage[T1]{fontenc}
\usepackage{babel}
\usepackage{amsmath,amsthm,amssymb}
\usepackage{hyperref}

% really usefull ?
\usepackage{tikz-cd}
\usetikzlibrary{arrows}
\usepackage{enumerate}

\theoremstyle{plain} 
\newtheorem{theorem}{Théorème}[section]
\newtheorem*{theorem*}{Théorème}
\newtheorem{proposition}[theorem]{Proposition}
\newtheorem*{proposition*}{Proposition}

\newtheorem{lemme}[theorem]{Lemme}
\newtheorem*{lemme*}{Lemme}
\newtheorem{scolie}[theorem]{Scolie}
\newtheorem{corollaire}[theorem]{Corollaire}
\newtheorem{conjecture}[theorem]{Conjecture}

\newtheorem*{remarque*}{Remarque}
\newtheorem{inegalite}{Inégalité}

%\newtheorem*{algorithme}{Algorithme}

% colors
%\usepackage{color}
%\newcommand\blue{\color{blue}}
%\newcommand\red{\color{red}}
%\newcommand\brown{\color{brown}}
%%\newcommand\red{\bf}
%\newcommand\Sch{\color{violet}}%Togliere nella versione "Schinzel".

% 
\numberwithin{equation}{section}
\numberwithin{figure}{section}

%%%% Macros

% Roman bold letters
\newcommand\ag{{\bf a }}
\newcommand\bg{{\bf b }}

\newcommand\xg{{\bf x }}

% Greek bold letters
\newcommand\alphag{{\boldsymbol\alpha}}

\newcommand\deltag{{\boldsymbol\delta}}

\newcommand\lambdag{{\boldsymbol\lambda}}

\newcommand\zetag{{\boldsymbol\zeta}}

% Double barred letters
\newcommand\C{{\mathbb C}}
\newcommand\K{{K}}%{{\mathbb K}}
\newcommand\N{{\mathbb N}}
\renewcommand\P{{\mathbb P}}
\newcommand\Q{{\mathbb Q}}
\newcommand\R{{\mathbb R}} 
\newcommand\Z{{\mathbb Z}}

% Calligraphic letters
\newcommand{\D}{\mathcal{D}}

% Latin abbrev
\newcommand{\opcit}{{\it op. cit.}}
\newcommand{\cf}{{\it cf}}
\newcommand{\ie}{{\it i.e.}}

% Math operators
\DeclareMathOperator{\codim}{codim}

\DeclareMathOperator{\Gal}{Gal}
\DeclareMathOperator{\pgcd}{pgcd}

\DeclareMathOperator{\Supp}{Supp}
\DeclareMathOperator{\Vol}{Vol}

% Misc

\newcommand{\Gm}{{\mathbb G}_{\rm m}}

\newcommand{\Qb}{{\overline{\mathbb Q}}}
\newcommand{\h}{{\widehat{h}}}
\newcommand{\hP}{{h_{\P_n}}}
\newcommand{\muess}{\hat{\mu}^{{\sf ess}}}

%%%%%%%%%%%%%%%%%%%%%%%%%%%%%%%%%%%%%%%%%%%%%%%%%%

\author[F. AMOROSO, N. H. ANDRIAMANDRATOMANANA et  D. SIMON]{Francesco AMOROSO$^{1}$, Njaka Harilala ANDRIAMANDRATOMANANA$^{2}$ et Denis SIMON$^{2}$}
\title{Sur une conjecture de Schinzel.}

\begin{document}
\maketitle
\begin{center}
{\small\it
$^{(1)}$Dipartimento di Matematica\\
Università di Torino\\
Via Carlo Alberto, 10 - 10123 Torino, Italy\\[0.3cm]
$^{(2)}$Laboratoire de mathématiques Nicolas Oresme, CNRS UMR 6139\\
Université de Caen Normandie, Campus II, BP 5186\\
14032 Caen Cedex, France\\[0.5cm]}
\end{center}

\begin{abstract}
Nous donnons une nouvelle preuve d’une conjecture de Schinzel sur l’intersection d’une sous--variété de codimension au moins 2 dans une puissance du groupe multiplicatif avec un tore de dimension 1. La preuve repose sur un théorème géométrique de Bézout de P. Philippon et sur des minorations pour la hauteur du premier auteur, S. David et E. Viada. Elle donne pour la première fois un résultat explicite, en fonction de la hauteur et du degré de la variété. Il s'inspire d'un énoncé similaire de S. Checcoli, F. Veneziano et E. Viada dans un produit de courbes elliptiques.\\ 

We give a  new proof of a conjecture of Schinzel on the intersection of a subvariety of codimension at least 2 in a power of the multiplicative group with a torus of dimension 1. The proof rests on a geometric Bézout’s theorem of P. Philippon and on lower bounds for the height of the first author, S. David and E. Viada. It gives for the first time an explicit result, depending on the height and degree of the variety. It is inspired on a similar statement on products of elliptic curves, by S. Checcoli, F. Veneziano and E. Viada.
\end{abstract}

\smallskip
\noindent \textbf{\small 2020 Mathematical Subject Classification:} 11J99, 11G30, 11G50.

\bigskip
\noindent \textbf{\small Keywords.} Multiplicative group, polynomials, heights. 
\bigskip

\hfill{\sl In memory of Andrzej Schinzel}

\section{Introduction}
La conjecture de Schinzel à laquelle se réfère le titre de cet article est la suivante~:
\begin{conjecture}[Schinzel, 1965]
\label{conj:schinzel}
Soient $F,G\in \Z[x_1,\dots,x_n]$ premiers entre eux. Soit ensuite $\ag:=(a_1,\dots,a_n)\in\Z^n$ et soit $\xi\in\Qb^*$ différent d'une racine de l'unité. Supposons que
$$
F(\xi^{a_1},\dots,\xi^{a_n}) = G(\xi^{a_1},\dots,\xi^{a_n})=0.
$$
Alors, il existe un vecteur non nul $\bg\in \Z^n$ orthogonal à $\ag$ et de norme $L_\infty$ satisfaisant~:
\begin{equation*}
\Vert \bg \Vert_{\infty} \leq B(F,G),
\end{equation*}
où $B(F,G)>0$ dépend seulement de $F$ et $G$.
\end{conjecture}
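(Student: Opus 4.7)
\emph{Plan de démonstration.}
Posons $P := (\xi^{a_1},\dots,\xi^{a_n})$ et remplaçons $V := Z(F)\cap Z(G)$ par une composante irréductible contenant $P$ ; par coprimalité de $F$ et $G$, elle est de codimension au moins $2$. Quitte à diviser $\ag$ par le pgcd de ses coordonnées, on le suppose primitif. Puisque $\xi$ n'est pas une racine de l'unité, le plus petit sous-groupe algébrique contenant $P$ est exactement le sous-tore de dimension $1$
\[
T_\ag := \overline{\{(t^{a_1},\dots,t^{a_n}) : t \in \Gm\}} \subset \Gm^n,
\]
et les vecteurs $\bg \in \Z^n$ orthogonaux à $\ag$ correspondent biunivoquement aux caractères $x^\bg$ triviaux sur $T_\ag$, c'est-à-dire aux caractères de sous-tores de $\Gm^n$ contenant $T_\ag$. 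Trouver un tel $\bg$ court revient donc à exhiber un sous-tore propre contenant $T_\ag$ et de petit degré.

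Le plan prévoit de distinguer deux cas. \emph{Premier cas : $T_\ag \subset V$.} Alors $T_\ag$, qui passe par $1$, est inclus dans un sous-tore maximal de $V$ ; un théorème de finitude de type Bogomolov--Bombieri--Zannier, dans sa version effective, fournit une liste finie $H_1,\dots,H_r$ de tels sous-tores, dont les degrés sont bornés explicitement en fonction de $F$ et $G$. Chaque $H_i$ étant un sous-tore propre de $\Gm^n$, Minkowski appliqué au réseau des caractères s'annulant sur $H_i$ produit un caractère non nul $\bg \perp \ag$ de norme majorée par une fonction explicite de $\deg H_i$, donc de $F$ et $G$.

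\emph{Second cas : $T_\ag \not\subset V$.} L'intersection $V \cap T_\ag$ est zéro-dimensionnelle et contient $P$. Le théorème de Bézout arithmétique de Philippon, combiné à $h(T_\ag) = 0$ et $\deg T_\ag = O(\|\ag\|_\infty)$, fournira une majoration
\[
\hat h(P) \leq C_1(F,G) \cdot \|\ag\|_\infty.
\]
On supposera par l'absurde que le plus court vecteur non nul de $\ag^\perp$ est de norme $\geq B$ : alors tout sous-tore propre contenant $T_\ag$ a degré $\geq B$, et les minorations de hauteur d'Amoroso, David et Viada, exploitant cette absence d'obstruction de petit degré, fourniront une minoration de $\hat h(P)$ croissante en $B$. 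Pour $B$ assez grand, elle contredit la majoration précédente, d'où l'existence d'un $\bg \in \ag^\perp$ non nul de norme $\leq B_0(F,G)$.

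\emph{Obstacle principal.} L'équilibre quantitatif entre les deux bornes est délicat : la majoration issue de Bézout étant polynomiale (voire linéaire) en $\|\ag\|_\infty$, il faut que la minoration d'Amoroso--David--Viada, appliquée à $P$ dont l'obstruction intrinsèque $T_\ag$ est déjà de degré $\asymp \|\ag\|_\infty$, fasse apparaître une dépendance en $B$ suffisamment robuste pour dominer. Trouver la bonne forme de cette inégalité, et sans doute travailler avec un sous-tore auxiliaire plus grand que $T_\ag$ pour découpler $B$ et $\|\ag\|_\infty$, constituera le cœur technique de la preuve ; en parallèle, il faudra expliciter les bornes quantitatives du théorème de finitude utilisé dans le premier cas.
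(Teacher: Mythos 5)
Votre premier cas ($T_{\ag}\subseteq V$) correspond au cas $\alphag\in V^u$ traité dans le texte et est correct dans son principe~; notez seulement qu'il n'est pas nécessaire d'invoquer un théorème de finitude général~: le lemme de Laurent (lemme~\ref{lemme:schmidt}) fournit directement un vecteur non nul $\bg\in\ag^\perp$ parmi les différences des supports des polynômes, donc $\Vert\bg\Vert_2\leq 2D$.

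En revanche, votre second cas contient une lacune réelle, et pas seulement un «~cœur technique à expliciter~». Vous proposez d'encadrer $\h(P)$~: majoration en $C_1(F,G)\Vert\ag\Vert_\infty$ par Bézout arithmétique appliqué à $V\cap T_{\ag}$, puis minoration «~croissante en $B$~» où $B:=\lambda_1(\ag^\perp)$. Or aucune minoration de type Dobrowolski--Amoroso--David--Viada pour le \emph{point} $P$ ne peut faire intervenir $B$~: l'indice d'obstruction de $P$ est déjà réalisé par $T_{\ag}$ lui-même (de degré comparable à $\Vert\ag\Vert_1$), et l'on obtient au mieux $\h(P)\geq\Vert\ag\Vert_\infty\,\h(\xi)$, minoration compatible avec votre majoration quelle que soit la valeur de $B$~; aucune contradiction ne peut donc sortir de cette comparaison, le paramètre $B$ n'apparaissant dans aucun des deux membres. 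L'hypothèse $\codim(W)\geq 2$ n'est d'ailleurs jamais exploitée quantitativement dans votre schéma. La démonstration du texte est structurellement différente sur ce point~: on ne minore pas la hauteur du point $P$, mais le \emph{minimum essentiel} d'une composante $Y$ de dimension positive de $W\cap T^{(m)}$, où les $T^{(i)}$ forment un drapeau de tores associés à une base réduite (au sens de Minkowski) de $\ag^\perp$ --- ce sont précisément les «~tores auxiliaires~» dont vous pressentez la nécessité. L'indice critique $m$ est choisi (et c'est là qu'intervient $\codim(W)\geq2$, via le scolie~\ref{scolie:W}) de sorte qu'une composante $Y$ soit commune à $W\cap T^{(m)}$ et $W\cap T^{(m+1)}$~; comme $\alphag\in V^*$, la plus petite variété de torsion contenant $Y$ est alors $T^{(m+1)}$, et le corollaire~\ref{cor:muess} (minoration du minimum essentiel en fonction de l'indice d'obstruction relatif, obtenue par le lemme de transfert de Rémond) combiné à Bézout donne une majoration de $\deg(T^{(m+1)})$ en fonction de $\deg(T^{(m)})$, donc de $\lambda_{m+1}(\ag^\perp)\geq\lambda_1(\ag^\perp)$. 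Sans cette construction, votre plan ne peut aboutir.
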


A. Schinzel a énoncé cette conjecture dans~\cite[Conjecture, p.~3]{Schinzel1965} et en a donné une preuve dans~\cite[Theorem 1, p.~47]{Schinzel1989} dans le cas particulier $n\leq 3$. On pourra aussi se référer à~\cite[Conjecture 1, p.~279 et Theorem 45, p.~144]{schinzel2000polynomials}. La conjecture~\ref{conj:schinzel} a ensuite été démontrée en toute généralité par E. Bombieri et U. Zannier~\cite[Appendix]{schinzel2000polynomials} et, dans une forme plus générale et avec une méthode partiellement différente, par E. Bombieri, D. Masser et U. Zannier~\cite[Theorem 1.6]{bombieri2007anomalous}.
Ces preuves sont effectives mais pas explicites (voir la discussion après~\cite[Theorem 2]{filaseta2008irreducibility}).

On présente ici une nouvelle démonstration en explicitant la dépendance en le degré et en la taille des coefficient de $F$ et $G$. Notre méthode s'inspire de l'approche de S. Checcoli, F. Veneziano et E. Viada dans~\cite{checcoli2014torsion}, où les auteurs ont démontré un analogue de la conjecture~\ref{conj:schinzel} dans le cas de produit de courbes elliptiques. Notre démarche utilise le théorème de Bézout arithmétique de P.~Philippon et une version fonctorielle du théorème de Dobrowolski généralisé. 

Étant donné un polynôme de Laurent $F\in \Z[x_1^{\pm1},\ldots,x_n^{\pm1}]$, on note $\deg(F)$ son degré (\ie{} le degré de la clôture de Zariski dans $\P_n$ de l'hypersurface de $\Gm^n$ d'équation $F=0$) et $\Vert F\Vert_1$ la norme $L_1$ du vecteur de ses coefficients.

\begin{theorem}
\label{theo:main}
Soient $F_1,\dots,F_s\in\Z[x_1^{\pm1},\ldots,x_n^{\pm1}]$. Notons 
$$
D:=\max_{1\leq i\leq s}\deg(F_i), \quad\hbox{et}\quad  h_1:=\max_{1\leq i\leq s}\log\Vert F_i\Vert_{1},
$$
et $\tilde{h}_1:=h_1+ (n+13)\log (n+1)D$. Soient également $\ag\in\Z^n\setminus\{\mathbf{0}\}$ et $\xi\in \Qb^*$ différent d'une racine de l'unité. 

On suppose qu'il existe une composante irréductible $W$ de codimension $k\geq 2$ de la sous--variété de $\Gm^n$ définie par $F_1=\dots=F_s=0$ et qui passe par le point $(\xi^{a_1},\dots,\xi^{a_n})$. Il existe alors un vecteur non nul $\bg\in \Z^n$ orthogonal à $\ag$ et de norme $L_2$ satisfaisant~:
\begin{equation}
\label{eq:maj_b1}
\Vert \bg \Vert_2 \leq (4n)^{17n}\,\tilde{h}_1 D^{k-1}\log(\tilde{h}_1D^k)^{2n}.
\end{equation}
\end{theorem}

On remarquera que la dépendance quasi--linéaire en la taille des coefficients dans le théorème~\ref{theo:main} s'accorde bien avec l'exemple suivant dû à Schinzel~:
\begin{multline*}
n=s=2, \quad F_1(x_1,x_2)=x_1-2, \; F_2(x_1,x_2)=x_2-2^a\\
\text{ et }\quad \xi=2, \; (a_1,a_2)=(1,a),
\end{multline*}
et également avec~\cite[Theorem 2.3]{asz2017},
où les auteurs montrent une borne pour $\Vert \bg \Vert_\infty$, indépendante de $h_1$ si cette dernière quantité est suffisamment petite par rapport à $\Vert \ag \Vert_\infty$.\\

M. Filaseta, A. Granville et A. Schinzel~\cite[Theorem B]{filaseta2008irreducibility} ont montré à l'aide de l'ex--conjecture~\ref{conj:schinzel} qu'il existe un algorithme qui permet de calculer le $\pgcd$ de deux polynômes $f$, $g\in\Z[x]$ de degré au plus $d\geq 2$ en $O_{f,g}(\log d)$ opérations binaires, sous l'hypothèse qu'au moins l'un des $f$ et $g$ n'a pas de facteur cyclotomique. La constante implicite dans $O_{f,g}$ dépend, effectivement mais pas explicitement, du nombre de coefficients de $f$ et $g$ et de leur taille. Ensuite, le premier auteur, L. Leroux et M. Sombra~\cite[Theorem 4.3]{amoroso2015overdetermined} ont précisé ce résultat, en montrant qu'il existe un algorithme qui permet de déterminer la partie non cyclotomique du pgcd de $f$ et $g$ en au plus $O_{f,g}(\log d)$ opérations binaires, même si $f$ et $g$ ont des facteurs cyclotomiques. La constante implicite dans $O_{f,g}$ dépend à nouveau de façon effective mais pas explicite du nombre de coefficients des polynômes et de leur taille. %Le théorème~\ref{theo:main} permet en principe d'expliciter cette dépendance.
Le théorème~\ref{theo:main} permet d'expliciter cette dépendance. En reprenant l'algorithme D de \cite{leroux2011these} (voir aussi~\cite{amoroso2015overdetermined}, algorithme 4.2), qui est à son tour une version plus précise de l'algorithme décrit dans le théorème B de~\cite{filaseta2008irreducibility}, on peut obtenir~:
$$
O_{f,g}(\log d)=\widetilde{O}\left((2h_1)^{(cN)^{N}}(1+\log d)\right),
$$
où $c\geq 1$ est une constante absolue, $N$ est le nombre de coefficients non nuls de $f$ et $g$, et $h_1$ une majoration pour la taille logarithmique de leur coefficients.
Cette estimation doublement exponentielle en $N$ pourrait suggérer de revenir sur l'algorithme D, car toutes les opération en $O_{f,g}(1)$ dans cet algorithme ont désormais un coût explicite en fonction de $N$ et $h_1$. L'aspect doublement exponentiel de cette complexité peut s'expliquer par le caractère exhaustif de la recherche de vecteurs de norme bornée, où les bornes utilisées sont obtenues récursivement à chaque étape comme puissance des bornes à l'étape précédente.

\subsection*{Plan de l'article}
Dans le paragraphe~\ref{section:Gmn}, après avoir rappelé quelques faits standards sur la géométrie dans une puissance d'un groupe multiplicatif, nous énoncerons des résultats de géométrie des nombres qui nous seront utiles dans la suite. Dans le paragraphe~\ref{section:hauteurs}, on rappellera les définitions de hauteur projective et normalisée et du minimum essentiel. Le paragraphe~\ref{section:bezout} sera consacré au rappel du théorème de Bézout de P. Philippon. Dans le paragraphe~\ref{section:minimum} on présentera une preuve d'une version fonctorielle du théorème de Dobrowolski en dimension supérieure de S. David et du premier auteur~\cite{amoroso1999probleme}. Cette minoration se déduit du résultat principale de~\cite{amoroso2012} à l'aide d'un lemme de transfert de G. Rémond. Nous prouverons le théorème~\ref{theo:main} dans le paragraphe~\ref{section:proof}.

\subsection*{Remerciements}
L'énoncé et la preuve du lemme~\ref{lemme:remond} sont de Ga\"el Rémond, qui les a transmis au premier auteur. Le premier auteur remercie également Patrice Philippon pour plusieurs clarifications concernant sa version du théorème de Bézout arithmétique et de l'article~\cite{philippon-sombra}, Francesco Veneziano pour des échanges à propos de la stratégie de la preuve de~\cite{checcoli2014torsion}.
Il remercie également Luca de Feo (IBM Zürich) et  Sina Schaeffler (IBM / ETH Zürich).\\

F. Amoroso et D. Simon remercient le projet IEA (International Emerging Action) PAriAIPP (Problèmes sur l'Arithmétique et l'Algèbre des Petits Points) du CNRS pour le soutien financier. F. Amoroso est membre du réseau INdAM GNSAGA.

\section{Généralités sur $\Gm^n$, géométrie des nombres}
\label{section:Gmn}
Pour les notations et les rappels ci--dessous, le lecteur pourra se rapporter à~\cite[chapitre IV, section 2.2]{Zan2024}. 

Nous travaillons dans une puissance $\Gm^n$ du groupe multiplicatif. Pour $\lambdag\in\Z^n$ et $\xg=(x_1,\ldots,x_n)$, on pose $\xg^\lambdag=x_1^{\lambda_1}\cdots x_n^{\lambda_n}$. Pour un entier $l$, on note aussi $[l]\colon\Gm^n\to\Gm^n$ la \og multiplication\fg{} par $l$, \ie{} le morphisme $\xg\mapsto\xg^l:=(x_1^l ,\ldots,x_n^l)$. 

Par sous--variété $V$ de $\Gm^n$, nous entendons un fermé de Zariski définie sur $\Qb$. On dit que $V$ est irréductible si sa fermeture est géométriquement irréductible. De même, on dit que $V$ est irréductible sur un corps $K\subseteq\Qb$ si sa fermeture de Zariski est irréductible sur $K$.

Par sous--groupe algébrique de $\Gm^n$ nous entendons une sous--variété algébrique fermée stable sous les opérations de groupe. Un sous--groupe algébrique irréductible est appelé (sous--)tore. Nous appelons sous--variété de torsion de $\Gm^n$, un translaté d'un sous--tore par un point de torsion. 
Tout sous--groupe algébrique est une union finie de sous--variétés de torsion.
 
Soit $\Lambda\subseteq\Z^n$ un sous--groupe de rang $k$. Alors 
$$
H_\Lambda
:=\{\xg\in\Gm^n,\; \forall\lambdag\in\Lambda,\;\xg^\lambdag=1\}
$$
est un sous--groupe algébrique de dimension $n-k$. De plus, $\Lambda\mapsto H_\Lambda$ est une bijection entre sous--groupes de $\Z^n$ et sous--groupes algébriques de $\Gm^n$. Le sous--groupe $\Lambda$ est saturé (\ie{} si $k\lambdag\in\Lambda$ avec $k\geq1$ entier et $\lambdag\in\Z^n$, alors $\lambdag\in\Lambda$) si et seulement si $H_\Lambda$ est un tore.

On fixe le plongement 
\begin{align*}
\Gm^n&\hookrightarrow\P_n\\
(x_1,\ldots,x_n)&\to(1:x_1:\cdots:x_n)
\end{align*}
Le degré d'une sous--variété est alors le degré de la fermeture de Zariski de son image. On note $\Qb[x_1^{\pm1},\ldots,x_n^{\pm1}]$ l'anneau des fonctions régulières sur $\Gm^n$. Par degré d'un polynôme de Laurent $F\in \Qb[x_1^{\pm1},\ldots,x_n^{\pm1}]$ on entend donc le degré de l'hypersurface d'équation $F=0$. 

Le lemme suivant, dû à M. Laurent~\cite[Lemme 3]{laurent1984equations} (voir aussi \cite[Lemma~4]{schmidt1996heights}), permet de déterminer des générateurs du sous--groupe de $\Z^n$ associé à un sous--groupe algébrique contenu dans une sous--variété de $\Gm^n$. Pour ${\mathcal A}\subset\Z^n$ on note $\D({\mathcal A}):=\{\lambdag_1-\lambdag_2\;\vert\;\lambdag_1, \lambdag_2\in {\mathcal A}\}$.

\begin{lemme}
\label{lemme:schmidt}
Soient $F_1,\dots,F_s\in\Qb[x_1^{\pm1},\ldots,x_n^{\pm1}]$ et soit $H$ un sous--groupe algébrique de $\Gm^n$ contenu dans la sous--variété de $\Gm^n$ définie par $F_1=\dots=F_s=0$, maximal par cette propriété. Il existe alors un sous--groupe $\Lambda$ de $\Z^n$ engendré par des vecteurs dans $\D(\cup_{i=1}^s \Supp(F_i))$ tel que $H=H_\Lambda$.
\end{lemme}

Soit $\Lambda$ un sous--groupe de $\Z^n$. On note $\Vol(\Lambda)$ le (co)volume de $\Lambda$ défini comme étant le volume (pour la mesure de Lebesgue associée) d'une maille fondamentale de $\Lambda$ vu comme un réseau du sous--espace $\R\Lambda$ de $\R^n$ engendré par $\Lambda$. 

\begin{lemme}
\label{lem:geom-nombres}
Soit $\Lambda$ un sous--groupe de $\Z^n$ de rang $k$ et $H_\Lambda\subseteq\Gm^n$ le sous--groupe algébrique associé, de dimension $d:=n-k$. 
On note $\Delta$ le maximum des valeurs absolues des déterminants $k\times k$ d'une base de $\Lambda$. Alors~:
\begin{enumerate}[1)]
\item $\Delta\leq\deg(H_\Lambda)\leq\binom{n}{k}\Delta$~;
\item $\binom{n}{k}^{-1/2}\deg(H_\Lambda)\leq\Vol(\Lambda)\leq\binom{n}{k}^{1/2}\deg(H_\Lambda)$
\end{enumerate}
\end{lemme}
\begin{proof}
Pour $p=1,2,\ldots\infty$, notons $\Vol_p(\Lambda)$ la norme $L_p$ du vecteur des déterminants $k\times k$ d'une base de $\Lambda$. Donc $\Vol_\infty(\Lambda)=\Delta$  et, d'après la formule de Cauchy--Binet, $\Vol_2(\Lambda)=\Vol(\Lambda)$. De plus, 
$$
\Vol_1(\Lambda)\leq\binom{n}{k}\Vol_\infty(\Lambda)\quad\hbox{ et }\quad\Vol_2(\Lambda)\leq\binom{n}{k}^{1/2}\Vol_\infty(\Lambda)
$$ 
et, par Cauchy--Schwarz, $\Vol_1(\Lambda)\leq\binom{n}{k}^{1/2}\Vol_2(\Lambda)$. Par~\cite{philippon-sombra}, p.312, avant--dernière formule et dernière phrase qui précèdent la Proposition 4.2, on a~:
$$
\Vol_\infty(\Lambda)\leq\deg(H_\Lambda)\leq\Vol_1(\Lambda),
$$
Les inégalités du lemme suivent immédiatement.
\end{proof}

Soit à nouveau $\Lambda$ un sous--groupe de $\Z^n$ de rang $k$. Pour $1\leq i \leq k$, on note $\lambda_i(\Lambda)$ le $i$--ème minimum successif de $\Lambda$~:
\begin{equation}
\label{eq:minima}
\lambda_i(\Lambda) = \inf\max_{1\leq j\leq i}\Vert \bg_j\Vert_2,
\end{equation}
où le infimum est pris sur l'ensemble des familles libres $\{\bg_1,\dots,\bg_i\}$ de $\Lambda$ et où  $\Vert\star\Vert_2$ désigne la norme $L_2$. En d'autres termes, $\lambda_i(\Lambda)$ est le rayon de la plus petite boule euclidienne dans $\R^n$
qui contient $i$ vecteurs de $\Lambda$ linéairement indépendants. 

À l'aide du théorème de Minkowski et de l'algorithme LLL, on déduit du point 2) du lemme~\ref{lem:geom-nombres} le résultat suivant~:
\begin{lemme}
\label{lem:minkowski}
Soient $n\geq 2$ et $\ag\in\Z^n\setminus\{\mathbf{0}\}$. Étant donnée une base $B=\{\bg_1,\dots,\bg_{n-1}\}$ de $\ag^\perp$, pour $i=1,\ldots,n-1$ on note $\Lambda_{B,i}$ le sous--groupe primitif engendré par $\bg_1,\dots,\bg_i$.
On désigne également par $V_i=\pi^{i/2}\Gamma(1+i/2)^{-1}$ le volume de la boule euclidienne unité en dimension $i$.
\begin{enumerate}[1)]
\item Soit $B$ une base de $\ag^\perp$. Alors $2^{-i}V_i\binom{n}{i}^{-1/2}\prod_{j=1}^i \lambda_j(\ag^\perp)\leq\deg(H_{\Lambda_{B,i}})$ pour $i=1,\ldots,n-1$.
\item Il existe une base $B$ de $\ag^\perp$ avec $\deg(H_{\Lambda_{B,i}})\leq  \binom{n}{i}^{1/2} \prod_{j=1}^i \lambda_j(\ag^\perp)$ pour $i=1,\ldots,n-1$.
\item
Soit $B$ une base réduite de $\ag^\perp$ au sens de \text{LLL} (\cf~\cite {lenstra1982factoring}). Alors on a~:\\ 
$\deg(H_{\Lambda_{B,i}})\leq\binom{n}{i}^{1/2}2^{i(n-2)/2}\prod_{j=1}^{i}\lambda_j(\ag^\perp)$ pour $i=1,\ldots,n-1$.
\end{enumerate}
\end{lemme}

\begin{proof}
Montrons 1). Soit $B$ une base de $\ag^\perp$. Comme $\Lambda_{B,i} \subseteq \ag^\perp$, on a $\lambda_j(\ag^\perp) \leq \lambda_j(\Lambda_{B,i})$ pour tout $j\in\{1,\dots,i\}$. Donc, d'après le second théorème de Minkowski~\cite[Theorem 16]{siegel1989lectures}, 
$$
\prod_{j=1}^i\lambda_j(\ag^\perp) \leq\prod_{j=1}^i\lambda_j(\Lambda_{B,i}) \leq 2^iV_i^{-1}\Vol(\Lambda_{B,i}).
$$
On obtient l'inégalité souhaitée en combinant avec la majoration de $\Vol(\Lambda_{B,i})$ du lemme~\ref{lem:geom-nombres}, point 2), où $\Lambda=\Lambda_{B,i}$.\\

Montrons maintenant 2). Soient $\tilde\bg_1,\dots,\tilde\bg_{n-1}$ des éléments linéairement indépendants de $\ag^\perp$ tels que
$\Vert \tilde\bg_i \Vert_2 \leq \lambda_i(\ag^\perp)$. 
Par une transformation triangulaire, on construit une base $B:=\{\bg_1,\dots,\bg_{n-1}\}$ de $\ag^\perp$ telle que, pour $i=1,\ldots,n-1$ on ait~: 
$\langle\tilde\bg_1,\dots,\tilde\bg_i\rangle_\Z \subseteq \langle\bg_1,\dots,\bg_i\rangle_\Z$. En utilisant l'inégalité de Hadamard on obtient~:
$$
\Vol(\Lambda_{B,i}) \leq \Vol (\langle\tilde\bg_1,\dots,\tilde\bg_i\rangle_\Z) \leq \prod_{j=1}^i \Vert \tilde\bg_j \Vert_2
\leq \prod_{j=1}^i \lambda_j(\ag^\perp).
$$
On en déduit l'inégalité souhaitée en combinant avec la minoration de $\Vol(\Lambda_{B,i})$ du lemme~\ref{lem:geom-nombres}, point 2), où $\Lambda=\Lambda_{B,i}$.\\

Montrons enfin la troisième partie du lemme. Soit $\{\bg_1,\dots,\bg_{n-1}\}$ une base réduite de $\ag^\perp$ au sens de \text{LLL}. D'après~\cite[Theorem 9]{nguyen2009hermite} (en prenant $\delta = 3/4$), on a~:
$$
\Vert \bg_j\Vert_2 \leq 2^{(n-2)/2}\lambda_j(\ag^\perp) \text{ pour }j\in\{1,\dots,n-1\}.
$$
Comme auparavant, on en déduit l'inégalité souhaitée en combinant avec la minoration de $\Vol(\Lambda_{B,i})$ du lemme~\ref{lem:geom-nombres}, point 2).\\
\end{proof}

Les points 1) et 2) de ce lemme suffisent pour montrer notre résultat principal. En revanche, avoir une majoration comme celle du point 3) 
en fonction du $i$--ème minimum successif d'une base obtenue algorithmiquement pourra être utile pour une version plus performante de l'algorithme pour le calcul du pgcd de deux polynômes lacunaires.

\section{Hauteurs}
\label{section:hauteurs}
Rappelons la notion de hauteur de Weil dans l'espace projectif $\P_n$. Soit $\alphag:=(\alpha_0:\alpha_1:\dots:\alpha_n)\in \P_n(\Qb)$. On choisit un corps de nombres qui contient les coordonnées projectives $\alpha_0,\alpha_1,\dots,\alpha_n$ de $\alphag$. On pose~:
\begin{equation}
\label{def:hauteur}
h(\alphag) := \frac{1}{[\K:\Q]}\sum_v n_v\log\vert\alphag\vert_v
\end{equation}
où la somme porte sur les places $v$ de $\K$ (finies et infinies),
$n_v:=[\K_v:\Q_v]$ est le degré local
et $\vert\alphag\vert_v:=\max(\vert\alpha_0\vert_v,\dots,\vert\alpha_n\vert_v)$.
Cette definition ne dépend ni du choix de $\K$ ni du choix des coordonnées projectives de $\alphag$.

Soit $Y$ une sous--variété géométriquement irréductible de $\P_n$.
Dans~\cite{philippon-ha1},~\cite{philippon-ha2} et~\cite{philippon-ha3},
P.~Philippon définit une notion de hauteur de $Y$, notée $\hP(Y)$, à l'aide de la théorie des formes éliminantes.
La définition de hauteur d'une variété s'étend par additivité
à un cycle de Chow $S=\sum_i [l_i]Y_i$ ($Y_i$ irréductibles)
à coefficients $l_i\in\N$, par~: $\hP(S):=\sum_i l_i\hP(Y_i)$.

Nous remarquons que dans~\cite{philippon-ha3} l'auteur considère des sous--variétés projectives
définies sur un corps de nombres $\K$ et qui sont implicitement supposées $\K$-irréductibles
(au moins on peut se réduire à ce cas en considérant le cycle des conjugués galoisiens de chaque variété).
Ici, nous utiliserons le mot {\sl sous--variété de $\P_n$} pour se référer à une intersection ensembliste
de sous--variétés géométriquement irréductibles de $\P_n$.
Comme on le fait usuellement, nous noterons aussi par $Y\cap W$ l'intersection ensembliste de $Y$ et $W$.
Soient $I$ et $J$ les idéaux homogènes définissant $Y$ et $W$.
On peut alors identifier cette intersection au cycle $ZR(I+J)$
supporté sur les composantes irréductibles de l'intersection et avec coefficients~$1$
(voir~\cite[p.~347]{philippon-ha3}).
Remarquons en revanche que, dans \opcit, le cycle $Y\cap W=Z(I+J)$ n'est en général pas réduit
et peut même avoir des composantes immergées.

Pour un point $\alphag=(\alpha_0:\alpha_1:\dots:\alpha_n)\in \P_n$,
la hauteur de la sous--variété projective $Y=\{\alphag\}$ coïncide avec la hauteur $L_2$ de $\alphag$,
qui est définie en prenant dans~\eqref{def:hauteur}
$$ \Vert\alphag\Vert_v:=(\vert\alpha_0\vert_v^2+\dots+\vert\alpha_n\vert_v^2)^{1/2}$$
aux places archimédiennes au lieu de $\vert\alphag\vert_v$.
Comme pour les points, on dispose également d'une formule explicite
pour la hauteur $\hP(Y)$ d'une hypersurface $Y$.
Soit $\K$ un corps de nombres et soit $F \in \K[x_1,\ldots,x_n]$.
En suivant P.~Philippon
(\cite[p.~346]{philippon-ha3}, en particulier le dernier paragraphe en prenant $p=1$ et $m_1=m$),
on définit une hauteur de $F$ par 
\begin{equation}
\label{def:eqhF}
h(F) := \frac{1}{[\K:\Q]}\sum_{v}[\K_v:\Q_v]\log \widetilde{M}_v(F) + \frac12\deg(F)\sum_{i=1}^{n}\frac{1}i
\end{equation}
où $\widetilde{M}_v(F)$ est le maximum des valeurs absolues $v$--adiques des coefficients de $F$ si $v$ est finie
et $\widetilde{M}_v(F)=\widetilde{M}(\sigma_v(F))$ si $v$ est infinie
associée au plongement $\sigma_v$ de $\K$ dans $\C$,
où, pour un polynôme homogène $P\in\C[x_0,\dots,x_n]$, 
\begin{equation}
\label{def:eqM}
\log \widetilde{M}(P) := \int_{S_{n+1}}\log\vert P\vert \sigma_{n}.
\end{equation}
Ici $\sigma_{n}$ désigne la mesure invariante de masse totale $1$ sur la sphère $S_{n+1}$ de rayon~$1$ de $\C^n$.
Soit maintenant $Z$ une hypersurface de $\P_n$ définie par l'équation $F = 0$, où $F\in\Qb[x_0,\dots,x_n]$ est homogène.
On a alors (\cite[p.~347]{philippon-ha3})
$$
\hP(Z) = h(F) + \frac12\deg(F)\sum_{i=1}^{n-1}\sum_{j=1}^i\frac{1}{j}.
$$
En particulier, si $F\in\Z[x_0,\dots,x_n]$, en tenant compte de la définition~\eqref{def:eqhF}, on a $h(F)\leq \log\Vert F\Vert_1+ \frac12\deg(F)\sum_{i=1}^{n}\frac{1}i$, et donc
\begin{equation}
\label{def:eqhZ}
\hP(Z)\leq \log\Vert F\Vert_1+ \frac12\deg(F)\sum_{i=1}^n\sum_{j=1}^i\frac{1}{j}.
\end{equation} 

Soit $Y$ une sous--variété irréductible de $\Gm^n$. D'après~\cite{philippon-ha3}, on définit la hauteur normalisée de $Y$
(par rapport au plongement $\Gm^n\hookrightarrow \P_n$) par~:
\begin{eqnarray}
\h(Y):=\lim_{m\rightarrow +\infty}\frac{\hP([m]Y)\deg(Y)}{m\deg([m]Y)}
\end{eqnarray}
où on a noté $\hP(Y)$ la hauteur de la clôture de Zariski de l'image de $Y$ dans $\P_n$. On peut vérifier que la hauteur $\h(\alphag)$ de $\alphag\in\Gm^n$ coïncide avec la hauteur de Weil de l'image de $\alphag$ dans $\P_n$. On étend ensuite la definition à une variété définie par linéarité, en prenant la somme des hauteurs normalisées des composantes irréductibles.

D'après~\cite[Proposition 2.1]{david1999minorations}, on a la relation suivante entre $\h(Y)$ et $\hP(Y)$~:
\begin{eqnarray}
\label{eq:hP-hGmn} 
\vert \h(Y) - \hP(Y) \vert \leq \tfrac{7}{2} \left(\dim Y +1\right)\log(n+1)\deg(Y).
\end{eqnarray}

On définit ensuite le minimum essentiel de $Y$, noté $\muess(Y)$, par~:
\begin{equation*}
\muess(Y) :=\inf\left\{\theta>0 \mid \overline{Y(\theta)}= Y \right\}
\end{equation*}
où $Y(\theta) := \{\alphag\in Y(\Qb)\mid \h(\alphag)\leq \theta\}$.
Le minimum essentiel de $Y$ est donc le seuil de la hauteur à partir duquel les points de $Y$ deviennent denses dans~$Y$. 

La hauteur normalisée et le minimum essentiel sont liés par des inégalités de Zhang~\cite[Theorem 5.2]{zhang1995positive}~: 
\begin{eqnarray}
\label{eq:Zhang}
\frac{\h(Y)}{(\dim(Y)+1)\deg(Y)}
\leq \widehat{\mu}^\textrm{ess}(Y)
\leq \frac{\h(Y)}{\deg(Y)}.
\end{eqnarray}

Rappelons (\cite{zhang1991positive}) qu'une variété géométriquement irréductible est de hauteur normalisée nulle (et donc de minimum essentiel nul par les inégalités qui précèdent) si et seulement si c'est une variété de torsion.

\section{Théorème de Bézout}
\label{section:bezout}
Nous utiliserons la version suivante du théorème de Bézout (géométrique et arithmétique) de P.~Philippon (\cite[théorème~3]{philippon-ha3}). 

\begin{theorem}
\label{theo:bezoutarithm}
Soient $Y$ et $W$ deux sous--variétés de $\P_n$ définies sur un corps de nombres $\K$ et $\K$--irréductibles.
Soient $X_1,\dots, X_g$ les composantes irréductibles de l'intersection ensembliste $Y\cap W$.

Alors on a
\begin{align*}
&\sum_{i=1}^g\deg(X_i)\leq\deg(Y)\deg(W),\\
&\sum_{i=1}^g\hP(X_i)\leq\hP(Y)\deg(W) + \hP(W)\deg(Y)+c\deg(W)\deg(Y),
\end{align*}
où
$$
c:=\frac{\log(2)}2(\codim(Y)+\codim(W))+\frac12\sum_{i=0}^{\dim Y}\sum_{j=0}^{\dim W}\frac1{i+j+1}. 
$$
\end{theorem}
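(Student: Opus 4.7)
Le plan est d'adopter le cadre classique de la théorie de l'élimination et des formes de Chow-Cayley. À toute sous-variété équidimensionnelle $V\subseteq\P_n$ de dimension pure $d$, j'associerais sa forme éliminante $f_V$: un polynôme multihomogène en $d+1$ blocs de $n+1$ variables (chaque bloc codant les coefficients d'une forme linéaire sur $\P_n$), qui s'annule précisément lorsque $V$ rencontre l'intersection des $d+1$ hyperplans correspondants. Le degré de $V$ est alors le degré de $f_V$ dans chaque bloc, et la hauteur $\hP(V)$ coïncide, à des termes de normalisation près apparaissant dans~\eqref{def:eqM}, avec la log-mesure de Mahler $\log\widetilde M(f_V)$. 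L'objectif devient alors d'exprimer les formes éliminantes des composantes $X_i$ à partir de $f_Y$ et $f_W$, puis de contrôler leurs degrés et leurs mesures de Mahler.

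Pour l'inégalité géométrique, je procéderais par récurrence sur $\codim(W)$, en intersectant $Y$ successivement avec des hypersurfaces $\{F_j=0\}$ définissant $W$ ensemblistement. Pour chaque intersection avec une hypersurface de degré au plus $\deg(W)$ en position propre par rapport à la variété déjà obtenue, la relation classique de Bézout au niveau des cycles fournit $\sum_i \mathrm{mult}_i \deg(X_i)=\deg(Y)\deg(F_j)$; comme ici les multiplicités ensemblistes sont prises égales à $1$, l'inégalité annoncée en découle immédiatement. Les composantes issues d'une intersection impropre sont traitées par déformation en position générique suivie d'un passage à la limite (semi-continuité du degré).

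Pour l'inégalité arithmétique, l'étape clé consiste à démontrer, pour l'intersection de $Y$ avec une hypersurface $\{F=0\}$, une estimation de la forme
\begin{equation*}
\hP\bigl(Y\cap\{F=0\}\bigr)\leq \hP(Y)\deg(F)+h(F)\deg(Y)+ c'\deg(Y)\deg(F),
\end{equation*}
obtenue en exprimant la forme de Chow de l'intersection comme spécialisation résultante de $f_Y$ par substitution des coefficients de $F$ en un des blocs, puis en appliquant la multiplicativité $\log\widetilde M(PQ)=\log\widetilde M(P)+\log\widetilde M(Q)$ et la comparaison standard entre mesure de Mahler et norme sup. En itérant sur des hypersurfaces $F_1,\dots,F_{\codim W}$ coupant ensemblistement $W$, en sommant sur les composantes $X_i$ de l'intersection et en regroupant soigneusement les termes de normalisation de~\eqref{def:eqhF}, on retrouve la majoration voulue avec la constante explicite $c$ du théorème.

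La difficulté principale sera le traitement des intersections \emph{impropres}, lorsque $\dim(Y\cap W)>\max(\dim Y+\dim W-n,-1)$. Dans ce cas, le schéma $Z(I+J)$ peut avoir des composantes immergées ou une dimension excessive, et l'estimation cycle-à-cycle de Bézout échoue sans précaution. Pour y remédier, je travaillerais avec le cycle $ZR(I+J)$ ne retenant que les composantes géométriques affectées de multiplicité $1$, et j'établirais que sa forme éliminante divise, à facteurs contrôlés près, la forme produite par la construction éliminante appliquée à $f_Y$ et aux équations de $W$. Cette divisibilité, combinée à la semi-continuité des degrés et des hauteurs le long d'une déformation vers une situation propre générique, constitue le cœur technique à maîtriser pour que les estimations de mesure de Mahler du paragraphe précédent passent au membre gauche en ne retenant que la somme des hauteurs des $X_i$.
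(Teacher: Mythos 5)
Le chemin que vous proposez — réduire le théorème à des intersections itérées de $Y$ avec des hypersurfaces de degré $\leq\deg(W)$ qui définissent $W$ ensemblistement — passe à côté du point essentiel et ne peut pas aboutir à la borne annoncée. En itérant $\codim(W)$ fois un Bézout hypersurface, on obtient au mieux $\sum_i\deg(X_i)\leq\deg(Y)\,\deg(W)^{\codim(W)}$, et non $\deg(Y)\deg(W)$; les composantes parasites introduites à chaque étape (celles des diviseurs auxiliaires qui ne sont pas dans $W$) ne peuvent pas être éliminées a posteriori sans perdre le contrôle du degré. Le même problème frappe l'inégalité de hauteurs: l'itération produit un terme $\hP(Y)\deg(W)^{\codim W}$ et un empilement d'erreurs linéaire en $\codim(W)$ qui ne se recombine pas en $\hP(Y)\deg(W)+\hP(W)\deg(Y)+c\deg(Y)\deg(W)$. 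La forme de la constante $c$ — symétrique en $Y$ et $W$, avec la double somme $\sum_{i=0}^{\dim Y}\sum_{j=0}^{\dim W}\frac1{i+j+1}$ — signale d'ailleurs qu'elle provient d'une construction par variété jointe et résultant des deux formes de Chow, et non d'une succession d'éliminations en une seule variable. Votre traitement des intersections impropres par déformation et semi-continuité est également superflu dans l'approche de Philippon, qui travaille directement avec le cycle intersection $Y\cdot W$ et absorbe l'impropreté dans le formalisme.

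L'argument du papier est en réalité beaucoup plus court et d'une nature différente: il ne redémontre rien, il \emph{cite} le théorème 3 de Philippon (\textit{Sur des hauteurs alternatives III}), qui fournit exactement les deux inégalités pour le cycle intersection $Y\cdot W$ avec la constante $c$, et ajoute une seule observation — toute composante irréductible isolée de l'intersection ensembliste apparaît, avec multiplicité $\geq1$, parmi les composantes isolées de $Y\cdot W$, d'où $\sum_i\deg(X_i)\leq\deg(Y\cdot W)$ et $\sum_i\hP(X_i)\leq\hP(Y\cdot W)$. Si vous voulez reconstituer la preuve sous-jacente, c'est la construction du résultant des deux formes éliminantes via la variété jointe $J(Y,W)\subset\P_{2n+1}$ et l'intersection avec la diagonale qu'il faut développer, pas une itération hypersurface par hypersurface.
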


\begin{proof}
On considère le cycle intersection $Y \cdot W$, son degré et sa hauteur
(voir \cite[section B, p.~353]{philippon-ha3}).
Toute composante irréductible isolée du cycle $Y \cap W$ apparaît comme composante isolée
du cycle intersection $Y \cdot W$ (\cf{}. \opcit{}, paragraphe avant l'énoncé du théorème 3),
et donc, par définition du degré et de la hauteur du cycle intersection,
$\sum_{i=1}^g\deg(X_i)\leq\deg(Y \cdot W)$ et $\hP(\sum_{i=1}^g\deg(X_i))\leq \hP(Y \cdot W)$.
On applique alors~\cite[théorème 3]{philippon-ha3}.

On pourra également se référer à~\cite[Theorem 3, p.~455]{habegger2008intersecting},
où cependant la valeur de la constante $c$ est moins précise ($c=3n^2$).
\end{proof}

Le corollaire suivant permet de majorer la hauteur d'une variété en fonction de la hauteur et du degré de ses générateurs.

\begin{corollaire}
\label{coro:bezoutarithm}
Soient $F_1,\dots,F_s\in\Z[x_0,x_1,\ldots,x_n]$ des polynômes homogènes définissant des hypersurfaces $Z_i$ de $\P_n$. Notons 
$$
D:=\max_{1\leq i\leq s}\deg(F_i), \quad\hbox{et}\quad  h_1:=\max_{1\leq i\leq s}\log\Vert F_i\Vert_{1}.
$$
Soit $\K\subseteq\Qb$ un corps et $W$ une composante $\K$--irréductible de codimension $k$ de $Z_1\cap\dots\cap Z_l$. Alors on a
$$ 
\deg(W) \leq D^k\quad\hbox{et}\quad\hP(W) \leq n h_1D^{k-1}+n\left(\sum_{i=1}^{n}\sum_{j=1}^i\frac{1}{j}\right)D^k.
$$
\end{corollaire}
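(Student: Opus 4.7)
The strategy is to prove both inequalities by induction on $k$, constructing a decreasing chain of $\K$-irreducible subvarieties
$\P_n \supsetneq W_1 \supsetneq \cdots \supsetneq W_k = W$,
with $\codim W_j = j$, in which each $W_j$ is a component of $W_{j-1} \cap H_j$, where $H_j$ is a $\K$-irreducible hypersurface contained in some $Z_{i_j}$ and containing $W$. One then applies Theorem~\ref{theo:bezoutarithm} at each step, and all estimates telescope.

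The chain is built as follows. Since $W \subseteq Z_i$ for every $i$ and $W$ is $\K$-irreducible, each $Z_i$ admits a $\K$-irreducible component containing $W$, cut out by a $\K$-irreducible factor of $F_i$. Suppose $W_{j-1}$ has been constructed with $j-1 < k$. There must exist an index $i_j$ with $W_{j-1} \not\subseteq Z_{i_j}$: otherwise $W_{j-1}$ would be contained in $\bigcap_i Z_i$, and since $W$ is itself a $\K$-irreducible component of this intersection, $W_{j-1}$ would be contained in $W$, contradicting $\codim W_{j-1} < \codim W$. Let $H_j \subseteq Z_{i_j}$ be a $\K$-irreducible hypersurface containing $W$; then $W_{j-1} \not\subseteq H_j$ (since $H_j \subseteq Z_{i_j}$), and any $\K$-irreducible component $W_j$ of $W_{j-1} \cap H_j$ containing $W$ has codimension exactly $j$.

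One then applies Theorem~\ref{theo:bezoutarithm} to the pair $(W_{j-1}, H_j)$. Since $H_j$ is a component of $Z_{i_j}$ and $\hP$ is additive over components, $\hP(H_j) \leq \hP(Z_{i_j})$; the inequality~\eqref{def:eqhZ} then yields $\hP(Z_{i_j}) \leq h_1 + \tfrac{D}{2}\,T$, where $T := \sum_{i=1}^n \sum_{j=1}^i 1/j$. Also $\deg(H_j) \leq D$. Bézout then gives
$$ \deg(W_j) \leq D\deg(W_{j-1}),\quad \hP(W_j) \leq D\,\hP(W_{j-1}) + \bigl(h_1 + \tfrac{D T}{2}\bigr)\deg(W_{j-1}) + c_j\,D\,\deg(W_{j-1}),$$
where $c_j$ is the explicit Bézout constant. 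The degree bound iterates immediately to $\deg(W) \leq D^k$. A telescoping induction on $j$ then gives $\hP(W) \leq k\,h_1\,D^{k-1} + \bigl(\tfrac{kT}{2} + \sum_{j=2}^k c_j\bigr) D^k$.

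The main obstacle will be to show that $\tfrac{kT}{2} + \sum_{j=2}^k c_j \leq n\,T$, which, using $k \leq n$, yields the desired bound $\hP(W) \leq n\,h_1\,D^{k-1} + n\,T\,D^k$. This reduces to a routine but delicate combinatorial estimate on the harmonic double sums $\sum_{a=0}^{\dim W_{j-1}}\sum_{b=0}^{\dim H_j} \frac{1}{a+b+1}$ appearing in the definition of $c_j$, and a mild bookkeeping of the term $\tfrac{\log 2}{2}(\codim W_{j-1} + 1) = \tfrac{\log 2}{2}j$. Everything else is formal manipulation of the Bézout recurrence.
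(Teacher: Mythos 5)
Your proof is correct, and it takes a genuinely different route from the paper's. The paper deduces the corollary in one stroke from Philippon's Corollaire 5 of \cite{philippon-ha3} (a ready-made multi-intersection arithmetic Bézout inequality, applied with $S=\P_n$), whereas you re-derive that special case by iterating the pairwise théorème~\ref{theo:bezoutarithm} along a chain $\P_n\supsetneq W_1\supsetneq\dots\supsetneq W_k=W$. Your chain construction is sound: the maximality of $W$ among the fermés $\K$-irréductibles de $\bigcap_i Z_i$ does force, at each step, an index $i_j$ with $W_{j-1}\not\subseteq Z_{i_j}$, hence $W_{j-1}\not\subseteq H_j$, and every component of $W_{j-1}\cap H_j$ then has codimension exactly $j$ ($\K$-irreducible sets being equidimensional). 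The step $\hP(H_j)\le\hP(Z_{i_j})$ uses the non-negativity of Philippon's height on the remaining components of $Z_{i_j}$, which is legitimate, and the telescoping gives exactly what you claim, namely $\hP(W)\le k\,h_1 D^{k-1}+\bigl(\tfrac{kT}{2}+\sum_{j=2}^k c_j\bigr)D^k$ with $T:=\sum_{i=1}^n\sum_{j=1}^i\tfrac1j$. What your approach buys is self-containedness (only the Bézout theorem already stated in the paper is needed); what it costs is precisely the combinatorial estimate you defer.

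That estimate is true, but it is the one point you must not wave away, because the obvious attack fails: bounding each $c_j$ by its maximum $\tfrac{n}{2}\log 2+\tfrac{n+1}{2}H_n$ (where $H_n:=\sum_{m=1}^n\tfrac1m$) and multiplying by $k-1$ is insufficient for all large $n$. One must use the triangular structure. Since the left-hand side increases with $k$, it suffices to treat $k=n$, i.e.\ to show $\sum_{j=2}^n c_j\le\tfrac{n}{2}T$. Bounding $\sum_{b=0}^{n-1}(a+b+1)^{-1}\le H_n$ gives $c_j\le\tfrac{j}{2}\log 2+\tfrac{n-j+2}{2}H_n$, whence $\sum_{j=2}^n c_j\le\tfrac14(n-1)(n+2)(\log 2+H_n)$; combined with $T=(n+1)H_n-n$, the desired inequality reduces to $(n^2+n-2)\log 2+2n^2\le(n^2+n+2)H_n$, which holds for $n\ge 8$ because then $H_n\ge 2+\log 2$, and is checked directly for $2\le n\le 7$. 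With this verification written out, your argument is a complete and more elementary alternative to the citation of Philippon's Corollaire 5.
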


\begin{proof}
Soit $Z_i$ l'hypersurface de $\P_n$ définie par $F_i=0$. On applique~\cite[Corollaire 5, p.~357]{philippon-ha3} avec $S=\P_n$, $\deltag=D$ et $\eta=h$,
en tenant compte des notations introduites au milieu de \opcit{} p.~347.
Notons cependant que dans \opcit{} $Z_i$ est supposée définie sur un corps de nombres $\K$ et $\K$--irréductible.
On peut se réduire à cette situation par linéarité et en négligeant les éventuelles multiplicités.
On a alors, en tenant compte du fait que
$\deg(\P_n)=1$ et $\hP(\P_n)= \frac{1}{2}\sum_{i=1}^{n}\sum_{j=1}^i\frac{1}{j}$
(\cf{}~\cite{philippon-ha3}, avant--dernier paragraphe, p. 346),
$$
d(S_l;D)\leq d(\P_n;D)= D^n
$$
et
$$
h(S_l;D)\leq h(\P_n;D) +nh\, d(\P_n;D) = \frac{1}{2}\left(\sum_{i=1}^{n}\sum_{j=1}^i\frac{1}{j}\right)D^{n+1} +nhD^n.
$$
Les composantes irréductibles $W_i$ de $Z_1\cap \dots \cap Z_l$
sont des composantes irréductibles du cycle intersection $S_l$.
En particulier, en négligeant les multiplicités d'intersection
et les composantes de codimension différente de $\codim(W)$, on a~:
$$
d(S_l;D)\geq\sum_i \deg(W_i)D^{\deg(W_i)}\geq \deg(W)D^{\dim(W)}
$$
et
$$
h(S_l;D)\geq\sum_i \hP(W_i)D^{\dim(W)+1}\geq \hP(W)D^{\dim(W)+1}.
$$
En comparant les majorations et les minorations de $d(S_l;D)$ et $h(S_l;D)$ on en déduit la majoration annoncée de $\deg(W)$ et~:
$$ 
\hP(W) \leq n hD^{k-1}+\frac12n\left(\sum_{i=1}^{n}\sum_{j=1}^i\frac{1}{j}\right)D^k,
$$
où $h := \max_{1\leq i\leq s} h(Z_i)$. En tenant compte de la majoration~\eqref{def:eqhZ} pour la hauteur projective d'une hypersurface, on a~:
$$
h\leq
\max_{1\leq i\leq s} \left(\log\Vert F_i\Vert_1 + \frac12\deg(F_i)\sum_{i=1}^n\sum_{j=1}^i\frac{1}{j}\right)
\leq h_1+ \frac12D\sum_{i=1}^{n}\sum_{j=1}^i\frac{1}{j}, 
$$
d'où la majoration annoncée pour $\hP(W)$.
\end{proof}

\section{Minimum essentiel}
\label{section:minimum}
%\begin{center}
%par Ga\"el Rémond\\[0.2cm]
%{\small Institut Fourier, CNRS UMR5582 \& Université Grenoble Alpes\\
%100, rue des maths 38610 Gières  - France}
%\end{center}
%\smallskip
%Le problème de Lehmer et le théorème de Dobrowolski se généralisent en dimension supérieure.
%%On dit que $Y$ est une variété de torsion si $Y$ est une réunion finie de translatés de sous--tores de $\Gm^n$ par des points de torsion.
%Rappelons que $\muess(Y)=0$ si et seulement si $Y$ est une variété de torsion.
%Le problème consistant à déterminer une bonne minoration pour le minimum essentiel d'une variété
%qui n'est pas de torsion a donné lieu à plusieurs travaux.
%
Dans le but de déterminer une bonne minoration pour le minimum essentiel d'une variété qui n'est pas de torsion, F. Amoroso et S. David (\cite[Définition 1.1, p.~337]{amoroso2003minoration}) ont introduit la notion d'indice d'obstruction. Étant donnés  deux sous--variétés irréductibles $Y\subsetneq W$ de $\Gm^n$ et un corps $\K\subseteq\Qb$, on définit l'indice d'obstruction $\omega_\K(Y,W)$ de $Y$ relatif à $W$ comme le minimum de
\begin{equation}
\label{eq:obstr}
\left(\frac{\deg(Z)}{\deg(W)}\right)^{1/\codim_W(Z)}
\end{equation}
où $Z$ parcourt les sous--variétés strictes de $W$, définies sur $\K$ et contenant $Y$, et où l'on a noté $\codim_W(Z) := \dim(W)-\dim(Z)$. Lorsque $\K=\Qb$, on omet l'indice $\Qb$ et lorsque $W=\Gm^n$, on notera simplement $\omega_\K(Y)$ l'indice d'obstruction $\omega_\K(Y,\Gm^n)$. 

Nous donnerons ici une minoration {\sl à la Dobrowolski} de l'indice d'obstruction. Cette minoration se déduit du résultat principal de~\cite{amoroso2012} à 
l'aide du lemme de transfert ci--dessous. Il s'agit d'une version plus précise du cas torique du théorème 3.7 de~\cite{remond2017generalisations}. Plus généralement, ce lemme permet de déduire des minorations du minimum essentiel en fonction de l'indice d'obstruction relatif d'une variété qui n'est pas de torsion à partir de minorations du minimum essentiel d'une variété tranverse ou faiblement transverse (\ie{} qui n'est contenue dans aucun translaté de sous--tore ou sous--variété de torsion). 

\begin{lemme}[G. Rémond]
\label{lemme:remond}
Soient $T\subseteq\Gm^n\hookrightarrow \P_n$ un sous--tore de dimension~$t$ et $Y\subsetneq T$ une sous--variété irréductible de $\Gm^n$ de dimension $y$. Il existe alors une sous--variété irréductible $W$ de $\Gm^t$ telle que~: 
\begin{enumerate}[1)]
\item $\dim(W)=y$~;
\item $\muess(W)\leq \binom{n}{t}\deg(T)^{-1}\muess(Y)$~;
\item $\omega_\K(W)\leq \deg(T)\omega_\K(Y,T)$ pour tout corps $\K\subseteq\Qb$~;
\item $\psi(W)=Y$ pour une certaine isogénie $\psi\colon\Gm^{t}\rightarrow T$.
\end{enumerate}
\end{lemme}
\begin{proof}
On note $\Delta$ le maximum des déterminants $t\times t$ d'une base du réseau associé à $T$. Quitte à permuter les coordonnées, on peut supposer qu'un des déterminants qui réalisent le maximum $\Delta$ est celui correspondant aux $t$ premières coordonnées de cette base. Par le lemme~\ref{lem:geom-nombres} 1) on a alors~:
\begin{equation}
\label{app}
\Delta\leq\deg(T)\leq\binom{n}{t}\Delta.
\end{equation} 
Considérons le morphisme d'oubli de coordonnées $\iota\colon\Gm^n\rightarrow \Gm^{t}$ qui envoie $(x_1,\dots,x_n)$ dans $(x_1,\dots,x_{t})$ et l'isogénie $\varphi\colon T\rightarrow\Gm^{t}$ composée de $\iota$ avec l'inclusion $T\hookrightarrow\Gm^n$. Par construction, $\deg(\varphi)=\Delta$. Soit $\psi\colon\Gm^{t}\rightarrow T$ l'isogénie duale. On a donc le diagramme suivant, où $[l]\colon\Gm^n\to\Gm^n$ pour $l\in\Z$ dénote le morphisme $\xg\mapsto\xg^l=(x_1^l ,\ldots,x_n^l)$, 
$$
\begin{tikzpicture}[node distance = 1.5cm, auto]
\node (Y) {$Y$};
\node (T) [right of=Y, node distance = 1.5cm] {$T$};
\node (Gmn) [right of=T, node distance = 1.5cm] {$\Gm^n$};
\node (GmT1) [below of=Y] {$\Gm^{t}$};
\node (GmT2) [below of=Gmn] {$\Gm^{t}$};
\draw[right hook->] (Y) to node {} (T);
\draw[right hook->] (T) to node {} (Gmn);
\draw[->] (GmT1) to node {\footnotesize $[\Delta]$} (GmT2);
\draw[->] (GmT1) to node {\footnotesize $\psi$} (T);
\draw[->] (T) to node {\footnotesize $\varphi$} (GmT2);
\draw[->] (Gmn) to node {\footnotesize $\iota$ oubli de coordonnées} (GmT2);
\end{tikzpicture}
$$
%où pour $[\Delta]$ désigne le morphisme de \og multiplication\fg{} par $\Delta$ dans une puissance de $\Gm$. 
On choisit pour $W$ une composante irréductible de $\psi^{-1}(Y)$. Les affirmations 1) et 4) sont alors claires par construction.

Pour montrer 2), on remarque que $[\Delta](W)=(\varphi\circ\psi)(W)=\varphi(Y)=\iota(Y)$ et donc $\Delta\muess(W)=\muess(\iota(Y))\leq\muess(Y)$. On obtient l'inégalité souhaitée en minorant $\Delta$ par~\eqref{app}.

Montrons 3). Soient $\K\subseteq\Qb$ un corps et $Z$ un fermé de Zariski défini sur $\K$ qui satisfait $Y\subseteq Z\subsetneq T$ et qui réalise le minimum dans la définition de $\omega_\K(Y,T)$~:
\begin{equation}
\label{eq:omega}
\left(\frac{\deg(Z)}{\deg(T)}\right)^{1/\codim_T(Z)}=\omega_\K(Y,T).
\end{equation}
On majore maintenant le degré de $\psi^{-1}(Z)$. Soit $E\subseteq\P^{t}$ un espace linéaire. Alors~:
$$
\psi^{-1}(\varphi^{-1}(E)\cap Z)=(\varphi\circ\psi)^{-1}(E)\cap\psi^{-1}(Z)=[\Delta]^{-1}(E)\cap\psi^{-1}(Z).
$$
Or $\deg([\Delta]^{-1}(E))=\Delta^{\codim_T(E)}$, donc si $E$ est suffisamment générique et de dimension $\codim_T(Z)$,
\begin{equation*}
\#\big([\Delta]^{-1}(E)\cap\psi^{-1}(Z)\big)=\Delta^{\dim(Z)}\deg\psi^{-1}(Z)
\end{equation*}
et
$$
\#\big(\psi^{-1}(\varphi^{-1}(E)\cap Z)\big)\leq \deg(\psi)\,\#(\varphi^{-1}(E)\cap Z)\leq \deg(\psi)\deg(Z),
$$
car $\varphi$ provient du morphisme $\iota$ d'oubli de coordonnées. On déduit alors des trois dernières formules centrées~:
$$
\Delta^{\dim(Z)} \deg\big(\psi^{-1}(Z)\big)\leq \deg(\psi)\deg(Z).
$$
D'où, en sachant que $\omega_\K(W)\leq \deg\big(\psi^{-1}(Z)\big)^{1/\codim\psi^{-1}(Z)}$, $\codim\psi^{-1}(Z)=\codim_T(Z)=d-\dim(Z)$, 
$\deg(\psi)=\deg([\Delta])/\deg(\varphi)=\Delta^{d-1}$, $\Delta\leq\deg(T)$ (\cf~\eqref{app}) et $\deg(T)^{-1}\deg(Z)=\omega_\K(Y,T)^{\codim_T(Z)}$ (\cf~\eqref{eq:omega})
\begin{align*}
\omega_\K(W)
&\leq \big(\Delta^{d-1-\dim(Z)}\deg(Z)\big)^{1/\codim\psi^{-1}(Z)}\\
&= \big(\Delta^{\codim_T(Z)-1}\deg(Z)\big)^{1/\codim_T(Z)}\\
&\leq \big(\deg(T)^{\codim_T(Z)-1}\deg(Z)\big)^{1/\codim_T(Z)}\\
&=\deg(T)\big(\deg(T)^{-1}\deg(Z)\big)^{1/\codim_T(Z)}\\
&=\deg(T)\omega_\K(Y,T).
\end{align*}
\end{proof}

À partir de ce lemme de transfert, on déduit une version fonctorielle du théorème de Dobrowolki en dimension supérieure.
\begin{corollaire}
\label{cor:muess}
Soit $Y$ une sous--variété irréductible de $\Gm^n$ qui n'est pas de torsion et soit $T\subseteq \Gm^n$ la plus petite sous--variété de torsion\footnote{On rappelle que l'intersection de deux variétés de torsion est soit non vide, soit réunion de variétés de torsion.} contenant $Y$. On suppose que $T$ est un tore et on note $t:=\dim(T)$ et $y:=\dim(Y)$. Alors~: 
$$ 
\muess(Y) \geq \tbinom{n}{t}^{-1}\omega_\Q(Y,T)^{-1}\big(935 t^5\log(t^2\deg(T)\omega_\Q(Y,T))\big)^{-(t-y)(t-y +1)(t+1)}.
$$
\end{corollaire}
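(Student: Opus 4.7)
Le plan est de combiner le lemme de transfert~\ref{lemme:remond} avec le résultat principal de~\cite{amoroso2012}, qui fournit une minoration \emph{à la Dobrowolski} du minimum essentiel des sous-variétés transverses de $\Gm^{t}$. L'idée est que le lemme~\ref{lemme:remond} permet de se ramener, quitte à perdre des facteurs explicites impliquant $\tbinom{n}{t}$ et $\deg(T)$, au cas transverse dans la puissance $\Gm^{t}$ de dimension minimale, où la minoration de~\cite{amoroso2012} s'applique directement.

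On applique d'abord le lemme~\ref{lemme:remond} à l'inclusion $Y\subsetneq T$. On dispose alors d'une sous-variété irréductible $W\subseteq\Gm^{t}$ de dimension $y$, et d'une isogénie $\psi\colon\Gm^{t}\to T$ telle que $\psi(W)=Y$, avec
$$
\muess(W)\leq \tbinom{n}{t}\deg(T)^{-1}\muess(Y)\quad\text{et}\quad \omega_\Q(W)\leq \deg(T)\omega_\Q(Y,T).
$$
On vérifie ensuite que $W$ est transverse dans $\Gm^{t}$, \ie{} qu'elle n'est contenue dans aucune sous-variété de torsion stricte. En effet, si $W\subseteq V'\subsetneq\Gm^{t}$ pour une sous-variété de torsion $V'$, alors $\psi(V')$ est une sous-variété de torsion de $T$ (puisque l'isogénie $\psi$ envoie sous-tores sur sous-tores et points de torsion sur points de torsion), de dimension $\dim(V')<t$, contenant $Y=\psi(W)$. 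Ceci contredit la minimalité de $T$ parmi les sous-variétés de torsion contenant $Y$. Donc $\Gm^{t}$ est la plus petite sous-variété de torsion contenant $W$.

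Le résultat principal de~\cite{amoroso2012}, appliqué à la sous-variété transverse $W$ de $\Gm^{t}$ de dimension $y$, fournit alors une minoration de la forme
$$
\muess(W)\geq \omega_\Q(W)^{-1}\bigl(935 t^5\log(t^2\omega_\Q(W))\bigr)^{-(t-y)(t-y+1)(t+1)}.
$$
En combinant cette inégalité avec les majorations issues du lemme~\ref{lemme:remond}, et en utilisant la monotonie du logarithme, il vient
$$
\muess(Y)\geq \tbinom{n}{t}^{-1}\deg(T)\muess(W)\geq \tbinom{n}{t}^{-1}\omega_\Q(Y,T)^{-1}\bigl(935 t^5\log(t^2\deg(T)\omega_\Q(Y,T))\bigr)^{-(t-y)(t-y+1)(t+1)},
$$
qui est précisément la minoration annoncée.

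L'obstacle principal réside dans la vérification de la transversalité de $W$, qui repose de façon essentielle sur les hypothèses que $T$ est un tore (afin que le lemme~\ref{lemme:remond} s'applique) et qu'il est la plus petite sous-variété de torsion contenant $Y$, ainsi que dans le branchement précis des constantes ($935 t^5$ dans le logarithme et l'exposant $(t-y)(t-y+1)(t+1)$) de la minoration de~\cite{amoroso2012}. Une fois ces deux points établis, la combinaison avec les inégalités du lemme de transfert est purement algébrique.
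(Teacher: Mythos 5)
Votre démonstration est correcte et suit essentiellement la même voie que celle de l'article : réduction au cas faiblement transverse dans $\Gm^{t}$ via le lemme~\ref{lemme:remond}, application de la minoration de~\cite{amoroso2012} (corollaire 1.5) à $W$, puis report des majorations des points 2) et 3) du lemme, les facteurs $\deg(T)$ se compensant. Votre vérification explicite de la transversalité faible de $W$ (en poussant par l'isogénie $\psi$ une éventuelle sous-variété de torsion stricte contenant $W$ pour contredire la minimalité de $T$) détaille utilement un point que l'article se contente d'affirmer.
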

\begin{proof}
Par le lemme~\ref{lemme:remond}, il existe une sous--variété irréductible $W$ de $\Gm^t$ ayant les propriétés de 1) à 4). Cette sous--variété est faiblement transverse par l'hypothèse faite sur $T$. On peut donc lui appliquer le corollaire 1.5 de~\cite{amoroso2012}~:
$$ 
\muess(W) \geq \omega_\Q(W)^{-1}\big(935 t^5\log(t^2\omega_\Q(W))\big)^{-(t-y)(t-y+1)(t+1)}.
$$
Par les points 2) et 3) (avec $\K=\Q$) du lemme~\ref{lemme:remond} on a~:
$$ 
\muess(W) \leq \tbinom{n}{t}\deg(T)^{-1}\muess(Y)\quad\text{ et }\quad \omega_\Q(W)\leq \deg(T)\omega_\Q(Y,T).
$$
La minoration annoncée s'ensuit. 
\end{proof}

\section{Preuve du théorème principal}
\label{section:proof}
Dans ce paragraphe, on prouve le théorème~\ref{theo:main}. Remarquons que l'énoncé est vide si $n=1$. On supposera donc dans la suite $n\geq2$. 

Soit $V$ la sous--variété de $\Gm^n$ définie par $F_1=\dots=F_s=0$. En suivant~\cite{schmidt1996heights}, on note $V^u$ la réunion des sous--variétés de torsion contenues dans $V$ et $V^*:=V \setminus V^u$.

Supposons d'abord $\alphag:=\xi^\ag:=(\xi^{a_1},\dots,\xi^{a_n})\in V^u$. Il existe alors un sous--tore $T$ et un point de torsion $\zetag$ tels que $\alphag \in \zetag T\subseteq V$. Soit $H$ un sous--groupe algébrique maximal parmi les sous--groupes $H$ tels que $T\subseteq H\subseteq \alphag^{-1}V$. D'après le lemme~\ref{lemme:schmidt}, il existe un sous--groupe $\Lambda$ de $\Z^n$ engendré par des vecteurs dans $\D ( \cup_{j=1}^s \Supp(F_i))$ tel que $H=H_\Lambda$. Soit $\bg\in \D ( \cup_{j=1}^s \Supp(F_j))$.
Donc $1=(\zetag^{-1}\alphag)^\bg=\zetag^{-\bg}\xi^{\langle\ag,\bg\rangle}$, d'où $\bg\in\ag^\perp$ car $\zetag$ est de torsion et $\xi$ n'est pas une racine de l'unité. Par ailleurs, $\Vert \bg\Vert_2\leq 2 \max_j\deg(F_j)$, puisqu'on peut supposer que chaque $F_j$ n'a que des exposants positifs et pas de mon\^omes en facteur, et donc la majoration souhaitée~\eqref{eq:maj_b1} pour $\Vert \bg\Vert_2$ est {\it a fortiori} satisfaite.

On remarquera que pour l'instant nous n'avons pas utilisé l'hypothèse qu'il existe une composante irréductible $W$ de codimension $\geq 2$ de $V$ qui passe par $\xi^\ag$.  On le fait maintenant pour montrer, sous l'hypothèse additionnelle $\alphag\in V^*$, le résultat plus précis suivant. Le complément à la fin de la proposition n'est pas nécessaire pour la preuve du théorème~\ref{theo:main}, mais il sera utilisé pour les applications à l'algorithmique des polynômes lacunaires.

\begin{proposition}
\label{prop:main}
Soient $F_1,\dots,F_s\in\Z[x_1^{\pm1},\ldots,x_n^{\pm1}]$. Notons 
$$
D:=\max_{1\leq i\leq s}\deg(F_i), \quad\hbox{et}\quad  h_1:=\max_{1\leq i\leq s}\log\Vert F_i\Vert_{1},
$$
et $\tilde{h}_1:=h_1+ (n+13)\log (n+1)D$. Soient également $\ag\in\Z^n\setminus\{\mathbf{0}\}$ et $\xi\in \Qb^*$ différent d'une racine de l'unité. Soit $V$ la sous--variété de $\Gm^n$ définie par $F_1=\dots=F_s=0$ et notons $\alphag:=\xi^\ag=(\xi^{a_1},\dots,\xi^{a_n})\in V^*$. On suppose 
$$
\alphag\in V^*\cap W,
$$ 
où $W$ est une composante irréductible de codimension $k\geq 2$ de $V$. 
Alors on a:
\begin{equation}
\label{eq:minimum}
\lambda_{k-1}(\ag^\perp) \leq (4n)^{17n}\,\tilde{h}_1 D^{k-1}\log(\tilde{h}_1D^k)^{2n}.
\end{equation}
En particulier, il existe un vecteur non nul $\bg \in \ag^\perp$ qui satisfait $\Vert \bg \Vert_2 \leq (4n)^{17n}\,\tilde{h}_1 D^{k-1}\log(\tilde{h}_1D^k)^{2n}$.

\noindent{\bf Complément.}
Soit $\{\bg_1,\dots,\bg_{n-1}\}$ une base réduite de $\ag^\perp$  au sens de l'algorithme~\text{LLL} . Pour $i=0,\ldots,n-1$, on note $\Lambda_i$ le sous--groupe de $\Z^n$ engendré par les vecteurs $\bg_1,\dots,\bg_i$ et on pose $T^{(i)} := H_{\Lambda_i}$.  Alors il existe\footnote{L'entier $m$ pourra dépendre de la base.} $m\in\{k-2,\dots,n-2\}$ tel que 
pour tout $i\in\{m+1,\dots,n-1\}$, toute composante irréductible de $W\cap T^{(i)}$ contenant $\alphag$ est de dimension $n-i-1$ et\footnote{\cf{} \eqref{eq:minima} pour la définition des minima successifs.}
\begin{equation}
\label{eq:minimum2}
\lambda_{m+1}(\ag^\perp) \leq 2^{n^2/2} (4n)^{17n} \,\tilde{h}_1 D^{k-1}\log(\tilde{h}_1D^k)^{2n}.
\end{equation}
\end{proposition}

\begin{proof}
Nous aurons besoin à plusieurs reprises de majorations d'analyse élémentaire. %Nous n'en expliciterons pas les calculs afin de ne pas alourdir l'exposition~; ils sont par ailleurs facilement vérifiables par un logiciel de calcul formel. 
Pour la commodité du lecteur nous avons signalé ces inégalités par les symboles \og $\leq^{(*)}$\fg{} et \og $\geq^{(*)}$\fg{}. Elles seront prouvées dans l'annexe.\\

Soit $\{\bg_1,\dots,\bg_{n-1}\}$ une base de $\ag^\perp$. Pour $i=0,\ldots,n-1$, on note $\Lambda_i$ le sous--groupe de $\Z^n$ engendré par les vecteurs $\bg_1,\dots,\bg_i$ et on pose $T^{(i)} := H_{\Lambda_i}$. On remarque que les $T^{(i)}$ sont des tores car les $\Lambda_i$ sont saturés. On a ainsi la suite de sous--tores suivante~:
$$ 
H_{\ag^\perp}=T^{(n-1)} \subseteq T^{(n-2)} \subseteq \dots \subseteq T^{(1)} \subseteq T^{(0)} = \Gm^n,
$$
de codimension $\codim(T^{(i)})=i$. Remarquons que pour $\bg\in\ag^\perp$ on a $\alphag^\bg=(\xi^\ag)^\bg=1$. Pour $i=0,\ldots,n-1$ on a donc $\alphag\in H_{\ag^\perp}= T^{(n-1)}\subseteq T^{(i)}$ et $\alphag\in W\cap T^{(i)}$. 

On a $\dim(W\cap  T^{(i)})\leq\dim( T^{(i)})\leq n-i$. Si $\dim(W\cap  T^{(i)})= n-i$ alors $ T^{(i)}\subseteq V$ et ceci contredit le fait que $\alphag \in V^*$. Donc
\begin{equation}
\label{eq:D_i}
\dim(W\cap  T^{(i)})\leq n-i-1.
\end{equation}

Pour $i\in\{0,\dots,n-1\}$, on note~:
$$
d_i:= \min_Y\dim Y,
$$
où $Y$ parcourt l'ensemble des composantes irréductibles de $W\cap  T^{(i)}$ contenant $\alphag$ (cet ensemble étant non vide car $\alphag\in W\cap T^{(i)}$). Par ce qui précède, on a~:
$$
0\leq d_i\leq n-i-1.
$$
Par hypothèse, $d_0=\dim (W) = n-k\leq n-2$.  On peut alors définir $m$ comme le plus grand entier vérifiant~:
$$ 
0\leq m\leq n-1,\qquad d_m\leq  n-m-2. 
$$
Remarquons que $m\leq n-2$, car $d_{n-1}>-1=n-(n-1)-2$. Donc $m+1\leq n-1$ et (par maximalité de $m$) $d_i = n-i-1$ pour $i\geq m+1$. 
Donc pour $i\geq m+1$, toute composante irréductible de $W \cap T^{(i)}$ contenant $\alphag$ est de dimension $n-i-1$. Cela montre la première affirmation du complément à l'énoncé de la proposition~\ref{prop:main}.
% (on remarquera que pour cela nous n'avons pas utilisé le fait que la base $\{\bg_1,\dots,\bg_{n-1}\}$ est réduite).}

Nous allons maintenant prouver la majoration~\eqref{eq:minimum}. Par ce qui précède, on a en particulier~:
\begin{scolie}
\label{scolie:W}
Il existe une composante irréductible $Y$, commune à $W \cap  T^{(m)}$ et $W \cap  T^{(m+1)}$, contenant $\alphag$, de dimension $n-m-2$.
\end{scolie}
On fixe maintenant une sous--variété irréductible $Y$ de $\Gm^n$ qui satisfait les affirmations de la scolie~\ref{scolie:W}. Comme $\dim(Y) \leq \dim(W)$, on a $n-m-2\leq n-k$ et donc 
\begin{equation}
\label{eq:m}
m+1\geq k-1\geq 1.
\end{equation}

On note $\Q(Y)$ le corps de définition de $Y$ sur $\Q$ et
$$
Y':=\bigcup_{\sigma} \sigma(Y)\quad\hbox{et}\quad W':=\bigcup_{\sigma} \sigma(W)
$$
où $\sigma$ parcourt $\Gal(\Q(Y)/\Q)$. On a~:
\begin{equation}
\label{eq:rational}
Y'\subseteq W'\cap  T^{(m)},
\end{equation}
car $ T^{(m)}$ est définie sur $\Q$. 

Nous avons besoin de deux lemmes auxiliaires. Le premier donne une majoration du degré et du minimum essentiel de $Y'$ à l'aide des résultats des paragraphes~ \ref{section:hauteurs} et \ref{section:bezout}. Rappelons que $\tilde{h}_1=h_1+ (n+13)\log (n+1)D$.
\begin{lemme*}
\begin{align}
\label{eq:degY'}
\deg(Y')&\leq D^k\deg(T^{(m)}),\\
\label{eq:muess}
\deg(Y')\muess(Y)&\leq n\tilde{h}_1 D^{k-1}\deg(T^{(m)}).
\end{align}
\end{lemme*}
\begin{proof}
Par le scolie~\ref{scolie:W} et par définition de $W'$ et $Y'$, cette dernière est une composante $\Q$--irréductible de $W' \cap  T^{(m)}$. D'après le corollaire~\ref{coro:bezoutarithm}, $\deg(W')\leq D^k$. La majoration~\eqref{eq:degY'} suit alors du théorème de Bézout~\ref{theo:bezoutarithm} (en choisissant $\K=\Q$).

La majoration de $\h(Y')$ demande plus de calculs. Par l'inégalité de Zhang (\ref{eq:Zhang}) entre minimum essentiel et hauteur normalisée et par la relation~\eqref{eq:hP-hGmn} entre hauteur normalisée et hauteur projective,
$$
\deg(Y)\muess(Y)\leq\h(Y)\leq\hP(Y)+ \tfrac{7}{2}(n+1)\log(n+1)\deg(Y).
$$
Donc, par linérarité et Galois--invariance du degré et de la hauteur projective et par~\eqref{eq:degY'},
\begin{multline}
\label{eq:muess1}
\deg(Y')\muess(Y)\leq\hP(Y')+ \tfrac{7}{2}(n+1)\log(n+1)\deg(Y')\\
\leq\hP(Y')+ \tfrac{7}{2}(n+1)\log(n+1)D^k\deg(T^{(m)}).
\end{multline}
Pour majorer $\hP(Y')$ on utilise~:\\[0.4cm]
$-$ Le théorème de Bézout~\ref{theo:bezoutarithm}
$$
\hP(Y')\leq\hP(W')\deg(T^{(m)}) + \hP(T^{(m)})\deg(W')+c_1(n)\deg(W')\deg(T^{(m)}),
$$
où $c_1(n):=n\log2+\tfrac12\sum_{i=0}^n\sum_{j=0}^n\frac1{i+j+1}$.\\
$-$ Les majorations du degré et de la hauteur projective de $W'$ (\cf{} corollaire~\ref{coro:bezoutarithm}),
$$
\deg(W') \leq D^k\quad\hbox{et}\quad\hP(W') \leq n h_1D^{k-1}+c_2(n)D^k,
$$
où $c_2(n):=n\sum_{i=1}^{n}\sum_{j=1}^i\frac{1}{j}$.\\
$-$ La relation~\eqref{eq:hP-hGmn} entre hauteur normalisée (qui est nulle) et hauteur projective du tore $T^{(m)}$, 
$$
\hP(T^{(m)})\leq \tfrac{7}{2} (n+1)\log(n+1)\deg(T^{(m)}).
$$~\\[0.1cm]
Par les quatre inégalités ci--dessus, 
\begin{multline*}
\hP(Y')
\leq \left(\hP(W')+\left(\tfrac{\hP(T^{(m)})}{\deg(T^{(m)})}+c_1(n)\right)\deg(W')\right)\deg(T^{(m)})\\
\leq\left(n h_1+\left(\tfrac{7}{2} (n+1)\log(n+1)+c_1(n)+c_2(n)\right)D\right)D^{k-1}\deg(T^{(m)}).
\end{multline*}
En majorant $\hP(Y')$ dans~\eqref{eq:muess1} à l'aide de cette dernière inégalité, on trouve~:
$$
\deg(Y')\muess(Y)\leq n(h_1+c_3(n)D)D^{k-1}\deg(T^{(m)}),
$$
où 
\begin{align*}
c_3(n)
&:=7(1+1/n)\log(n+1)+c_1(n)/n+c_2(n)/n\\
&=7 (1+1/n)\log(n+1)+\log(2)+\frac1{2n}\sum_{i=0}^n\sum_{j=0}^n\frac1{i+j+1}+\sum_{i=1}^{n}\sum_{j=1}^i\frac{1}{j}\\
&\leq^{\eqref{inega:1}} (n+13)\log (n+1). % inega:1
\end{align*}
\end{proof}

Le point crucial de la preuve est la majoration suivante de $\deg(T^{(m+1)})$ en fonction de $\deg(T^{(m)})$, majoration qui repose sur le corollaire~\ref{cor:muess}.

\begin{lemme*}
On a~:
\begin{multline}
\label{eq:tores1}
\deg(T^{(m+1)})\\ \leq n2^n\tilde{h}_1 D^{k-1}\deg(T^{(m)})\left(935n^5\log\left(n^2D^n\deg(T^{(m)})\right)\right)^{2n}.
\end{multline}
\end{lemme*}

\begin{proof}
Soit $T\subseteq \Gm^n$ la plus petite sous--variété de torsion contenant $Y$. On a $Y\subseteq T^{(m+1)}$ et donc $Y\subseteq T \subseteq  T^{(m+1)}$. Par ailleurs $\dim(Y) = n-m-2$ et $\dim(T^{(m+1)})=n-m-1$ et donc, par irréductibilité, on a soit $T=Y$ soit $T=T^{(m+1)}$. Or $Y$ n'est pas de torsion car $\alphag\in Y\subseteq V$ et $\alphag \in V^*$. On en déduit que $T= T^{(m+1)}$. Comme $\codim_{T^{(m+1)}}(Y)=1$ on a (\cf{} définition~\ref{eq:obstr})~:
$$
\omega_\Q(Y,T^{(m+1)}) \leq \frac{\deg(Y')}{\deg(T^{(m+1)})}.
$$
D'après le corollaire~\ref{cor:muess}, avec $t=\dim(T)= n-m-1$ et $y=\dim(Y)=n-m-2$, on a~:
\begin{align*}
\muess(Y) 
&\geq \tbinom{n}{n-m-1}^{-1}\omega_\Q(Y,T^{(m+1)})^{-1}\\
&\;\times\big(935(n-m-1)^5\log((n-m-1)^2\deg(T^{(m+1)})\omega_\Q(Y,T))\big)^{-2(n-m)}\\
&\geq 2^{-n}\omega_\Q(Y,T^{(m+1)})^{-1}\left(935n^5\log\left(n^2\deg(T^{(m+1)})\omega_\Q(Y,T)\right)\right)^{-2n}.
\end{align*}
Donc
$$
\deg(T^{(m+1)}) \leq 2^n\deg(Y')\muess(V)\left(935n^5\log(n^2\deg(Y'))\right)^{2n}. 
$$
En majorant $\deg(Y')\muess(V)$ par~\eqref{eq:muess} et en majorant $\log(n^2\deg(Y'))$ par~\eqref{eq:degY'} (et $D^k$ par $D^n$), on obtient la majoration souhaitée~\eqref{eq:tores1}.
\end{proof}
Pour $i=1,\ldots,n-1$ notons pour abréger $\lambda_i:=\lambda_i(\ag^\perp)$ le $i$--ème minimum successif (\cf{} \eqref{eq:minima}) du réseau $\ag^\perp$. On va déduire à partir de ce lemme une majoration de $\lambda_{m+1}$. On garde les notations ci--dessus et on choisit pour $\{\bg_1,\dots,\bg_{n-1}\}$ une base de $\ag^\perp$ qui satisfait l'inégalité du lemme~\ref{lem:minkowski} 2). D'après les points 1) et 2) du lemme~\ref{lem:minkowski}, on encadre le volume des tores $T^{(i)}$ par des produits de minima successifs~:
\begin{equation}
\label{eq:degTore}
2^{-i}V_i \binom{n}{i}^{-1/2}\prod_{j=1}^i \lambda_j\leq\deg(T^{(i)})\leq\binom{n}{i}^{1/2} \prod_{j=1}^i \lambda_j
\end{equation}
où $V_i=\pi^{i/2}\Gamma(1+i/2)^{-1}$ (cet encadrement est encore valable pour $i=0$, car $\deg(T^{(0)})=\deg(\Gm^n)=1$). On minore $\deg(T^{(m+1)})$ et on majore $\deg(T^{(m)})$ par~\eqref{eq:degTore}. On obtient~:
\begin{equation}
\label{eq:tores2}
\frac{\deg(T^{(m+1)})}{\deg(T^{(m)})}\geq c(n,m)^{-1}\lambda_{m+1}
\end{equation}
où $c(n,m):=2^{m+1}\tbinom{n}{m+1}^{1/2}\tbinom{n}{m}^{1/2} V_{m+1}^{-1}$. En majorant à nouveau $\deg(T^{(m)})$ par~\eqref{eq:degTore} et en tenant compte de l'inégalité $\lambda_{j} \leq \lambda_{m+1}$ pour $j=1,\ldots,m+1$,
\begin{equation}
\label{eq:tores3}
\deg(T^{(m)})\leq \tbinom{n}{m}^{1/2} \lambda_{m+1}^m
\leq 2^{n/2} \lambda_{m+1}^n.
\end{equation}
Par~\eqref{eq:tores1},~\eqref{eq:tores2} et~\eqref{eq:tores3} on a~:
$$
c(n,m)^{-1}\lambda_{m+1}\leq n2^n\tilde{h}_1 D^{k-1}
\left(935n^5\log\left(n^22^{n/2}D^n\lambda_{m+1}^n\right)\right)^{2n}.
$$
En tenant compte des inégalités 
$$
n2^n\!\!\max\limits_{0\leq m \leq n-2}c(n,m)\leq^{\eqref{inega:2}} n^{2n} % inega:2
\quad\hbox{et}\quad n^2 2^{n/2}\leq^{\eqref{inega:3}} 3^n, % inega:3
$$
on en déduit~:
$$
\lambda_{m+1}\leq \tilde{h}_1 D^{k-1}\left(935n^7\log(3D\lambda_{m+1})\right)^{2n}.
$$
On pose $x_0:=\lambda_{m+1}^{1/2n}$; la dernière inégalité se lit alors~:
\begin{equation}
\label{eq:x0}
x_0 \leq(\tilde{h}_1 D^{k-1})^{1/2n}\,935n^7\log\left(3Dx_0^{2n}\right)=\alpha\log(\beta x_0)
\end{equation}
avec $\alpha:=2\cdot935n^8(\tilde{h}_1 D^{k-1})^{1/2n}$ et $\beta:=(3D)^{1/(2n)}$. 

\begin{lemme*}
\begin{equation}
\label{eq:l1}
x_0\leq 2\alpha\log(\alpha\beta).
\end{equation}
\end{lemme*}
\begin{proof} 
Notons que $x_0>1/\beta$, car sinon $x_0\leq \alpha\log(\beta x_0)\leq0$. Posons $f(x) := x/\log (\beta x)$ pour $x> 1/\beta$. Cette fonction est décroissante sur $]1/\beta, e/\beta[$
et croissante sur $[e/\beta,+\infty[$. Elle admet un minimum en $e/\beta$ qui vaut $e/\beta$. 

L'inéquation $x\leq \alpha\log(\beta x)$ est équivalente à $f(x)\leq \alpha$. On a donc $\alpha\geq e/\beta$, d'où $x_1:= e^{1/2} \alpha \log (\alpha\beta)\geq e/\beta$. Si $x_0< e/\beta$ on a alors $x_0\leq x_1$. Supposons donc $x_0\geq e/\beta$. On a~:
$$
\frac{f(x_1)}{\alpha}
= \frac{e^{1/2}\log (\alpha\beta)}{1/2+\log (\alpha\beta) + \log \log (\alpha\beta))}
\geq 1
$$
(car $(e^{1/2}-1)t - \log(t) - 1/2\geq^{\eqref{inega:4}} 0$ pour $t\geq 1$) et donc $\alpha \leq f(x_1)$. % inega:4
Par~\eqref{eq:x0}, on a aussi $f(x_0)\leq \alpha$. Donc, par croissance de $f$ et en majorant $\sqrt{e}$ par $2$, on en déduit que $x_0\leq x_1\leq 2\alpha \log (\alpha\beta)$.
\end{proof}
D'après~\eqref{eq:l1} on obtient 
\begin{align*}
\lambda_{m+1}=x_0^{2n}
&\leq \left(2\alpha\log(\alpha\beta)\right)^{2n}\\
&= \tilde{h}_1 D^{k-1}\left(4\cdot 935n^8\log(\alpha\beta)\right)^{2n}\\
&= \tilde{h}_1 D^{k-1}\left(1870n^7\log\left((\alpha\beta)^{2n}\right)\right)^{2n}\\
&= \tilde{h}_1 D^{k-1}\left(1870n^7\log\left(3(1870n^8)^{2n}\tilde{h}_1D^k\right)\right)^{2n},
\end{align*}
et, en tenant compte de l'inégalité $x+y\leq xy$, valable pour $x,y\geq2$ et que l'on peut donc utiliser ici car
$\log(\tilde{h}_1D^k)\geq \log((n+13)\log (n+1))\geq^{\eqref{inega:5}} 2$, % inega:5
\begin{align*}
\lambda_{m+1}
&\leq\tilde{h}_1 D^{k-1}\left(1870n^7\log\left(3(1870n^8)^{2n}\right)\right)^{2n}\log(\tilde{h}_1D^k)^{2n}\\
&\leq\left(2\cdot1870n^8\log(3\cdot1870n^8)\right)^{2n}\tilde{h}_1 D^{k-1}\log(\tilde{h}_1D^k)^{2n}\\
&\leq(4n)^{17n}\,\tilde{h}_1 D^{k-1}\log(\tilde{h}_1D^k)^{2n},
\end{align*}
où l'on a utilisé l'inégalité $2\cdot1870n^8\log(3\cdot 1870n^8)\leq^{\eqref{inega:6}}(4n)^{17/2}$. % inega:6
En tenant compte que $\lambda_{k-1}\leq\lambda_{m+1}$ par~\eqref{eq:m}, on en déduit \eqref{eq:minimum}, ce qui achève la preuve de la proposition~\ref{prop:main}.\\ 

Nous allons maintenant terminer la preuve du complément. On a déjà montré que pour tout $i\in\{m+1,\dots,n-1\}$, toute composante irréductible de $W\cap T^{(i)}$ contenant $\alphag$ est de dimension $n-i-1$. Pour terminer la preuve il nous reste à montrer la deuxième affirmation du complément. On choisit pour $\{\bg_1,\dots,\bg_{n-1}\}$ une base de $\ag^\perp$ qui soit réduite  au sens de l'algorithme \text{LLL}. Elle satisfait donc l'inégalité du lemme~\ref{lem:minkowski} 3). D'après les points 1) et 3) de ce lemme,
\begin{equation}
\label{eq:degTorebis}
2^{-i}V_i \binom{n}{i}^{-1/2}\prod_{j=1}^i \lambda_j\leq\deg(T^{(i)})\leq 2^{i(n-2)/2}\binom{n}{i}^{1/2} \prod_{j=1}^i \lambda_j.
\end{equation}
En procédant comme auparavant, on obtient~:
\begin{align*}
\frac{\deg(T^{(m+1)})}{\deg(T^{(m)})}
&\geq c'(n,m)^{-1}\lambda_{m+1},\\
n^2 \deg(T^{(m)})&%\leq 2^{m(n-2)}\tbinom{n}{m}^{1/2} \lambda_{m+1}^m
\leq n^2 2^{m(n-2)/2+n/2} \lambda_{m+1}^n\leq 2^{n^2} \lambda_{m+1}^n,
\end{align*}
où cette fois $c'(n,m):=2^{m(n-2)/2+m+1}\tbinom{n}{m+1}^{1/2}\tbinom{n}{m}^{1/2} V_{m+1}^{-1}$ et où l'on a utilisé l'inégalité $n^22^{m(n-2)/2+n/2}\leq n^22^{(n-2)^2+n/2}\leq^{\eqref{inega:7}} 2^{n^2}$. Donc, par~\eqref{eq:tores1},
\begin{align*}
c'(n,m)^{-1}\lambda_{m+1}
%&\leq n2^n\tilde{h}_1 D^{k-1}\left(935n^5\log\left(n^22^{m(n-2)+n/2}D^n\lambda_{m+1}^n\right)\right)^{2n}\\
&\leq n2^n\tilde{h}_1 D^{k-1}\left(935n^6\log(2^nD\lambda_{m+1})\right)^{2n}.
\end{align*}
En tenant compte de~:
$$
n2^n(935n^6)^{2n}\!\!\max\limits_{0\leq m\leq n-2}c'(n,m)\leq^{\eqref{inega:8}} 2^{n^2/2}(2^{12}n^{25/4})^{2n}
$$
on en déduit~:
$$
\lambda_{m+1}\leq \tilde{h}_1 D^{k-1}2^{n^2/2}\left(2^{12}n^{25/4}\log(2^nD\lambda_{m+1})\right)^{2n}.
$$
On pose $x_0:=\lambda_{m+1}^{1/2n}$; la dernière inégalité se lit alors~:
$$
x_0 \leq(\tilde{h}_1 D^{k-1})^{1/2n}\cdot2^{n/4}\cdot 2^{12}n^{25/4}\log(2^nDx_0^{2n})=\alpha\log(\beta x_0)
$$
avec $\alpha:=(\tilde{h}_1 D^{k-1})^{1/2n}\cdot2^{n/4}\cdot 2^{12}n^{25/4}\cdot 2n$ et $\beta:=(2^nD)^{1/2n}$. D'après~\eqref{eq:l1} on obtient 
\begin{align*}
\lambda_{m+1}=x_0^{2n}
&\leq \left(2\alpha\log(\alpha\beta)\right)^{2n}\\
&= \tilde{h}_1 D^{k-1}\cdot2^{n^2/2}\left(2\cdot 2^{12}n^{25/4}\cdot 2n\cdot\log(\alpha\beta)\right)^{2n}\\
&= \tilde{h}_1 D^{k-1}\cdot2^{n^2/2}\left(2^{13}n^{25/4}\cdot\log\left((\alpha\beta)^{2n}\right)\right)^{2n}.
\end{align*}
En tenant compte de l'inégalité $x+y\leq xy$, valable pour $x,y\geq2$ (et que l'on peut donc utiliser par la majoration $\log(\tilde{h}_1D^k)\geq 2$ déjà mentionnée), on a~:
\begin{align*}
\log\left(\left(\alpha\beta)^{2n}\right)\right)
&=\log\left((2^{n/4}\cdot 2^{12}n^{25/4}\cdot 2n)^{2n}\,2^n\tilde{h}_1D^k\right)\\
&= n\log( 2^{27}\cdot 2^{n/2}\cdot n^{29/2})\log(\tilde{h}_1D^k)\\
&\leq^{\eqref{inega:9}} 2^4n^2\log(\tilde{h}_1D^k)
\end{align*}
Donc 
\begin{multline*}
\lambda_{m+1}
\leq 2^{n^2/2}(2^{17}n^{33/4})^{2n}\tilde{h}_1 D^{k-1} \log(\tilde{h}_1D^k)^{2n}\\
\leq^{\eqref{inega:10}} 2^{n^2/2} (4n)^{17n}  \tilde{h}_1 D^{k-1} \log(\tilde{h}_1D^k)^{2n}.
\end{multline*}
\end{proof}

\section*{Annexe. Preuves des inégalités numériques.}
\label{annexe:inegalite}

\begin{inegalite}
  \label{inega:1}
  Pour $n\geq 2$ on a~:
\begin{multline*}
7 (1+1/n)\log(n+1)+\log(2)+\frac1{2n}\sum_{i=0}^n\sum_{j=0}^n\frac1{i+j+1}+\sum_{i=1}^{n}\sum_{j=1}^i\frac{1}{j}\\[0.2cm]
\leq (n+13)\log (n+1)
\end{multline*}
\end{inegalite}
\begin{proof} Pour $n=2$ on vérifie par calcul. On suppose donc $n\geq 3$.
Pour un entier $k\geq 1$ notons $H_k:=\sum_{i=1}^k\tfrac1i$ le $k$-ième nombre harmonique. On a alors~:
$$
\frac1{2n}\sum_{i=0}^n\sum_{j=0}^n\frac1{i+j+1}
\leq\frac1{2n}\sum_{i=0}^n\sum_{j=0}^n\frac1{j+1}= \frac{n+1}{2n}H_{n+1}.
$$
et
$$
\sum_{i=1}^{n}\sum_{j=1}^i\frac{1}{j}=\sum_{j=1}^n\frac{n-j+1}{j}= (n+1) H_n-n.
$$
%où l'on a utilisé $(n+1)\log(1+1/n)\geq 1$. 
En utilisant l'inégalité $H_k\leq 1+\log k$, on peut donc majorer le membre de gauche de l'inégalité proposée par~: 
$$
7 (1+\tfrac1n)\log(n+1)+\log(2)+(n+\tfrac32+\tfrac1{2n})H_{n+1}-n\leq f(n)\\
$$
où pour $t>0$ on a noté
$$
f(t)=7 (1+\tfrac1t)\log(t+1)+\log(2)+(t+\tfrac32+\tfrac1{2t})(1+\log(t+1))-t.
$$
Soit $t_0=9.56...$ . La fonction $t\to f(t)/((t+13)\log(t+1))$ est décroissante dans l'intervalle $[3,t_0]$ et croissante pour $t>t_0$. De plus, $f(3)<1$ et $\lim_{t\to\infty}f(t)=1$. On en déduit la majoration souhaitée.
\end{proof}

\begin{inegalite}
\label{inega:2}
Pour $i\geq 1$ et $n\geq 2$ posons $V_i:=\pi^{i/2}\Gamma(1+i/2)^{-1}$ et 
$c(n,m):=2^{m+1}\tbinom{n}{m+1}^{1/2}\tbinom{n}{m}^{1/2} V_{m+1}^{-1}$. On a alors~:
$$
n2^n\!\!\max\limits_{0\leq m \leq n-2}c(n,m)\leq n^{2n}. 
$$
\end{inegalite}
\begin{proof}
Pour $n=2,3$, on la vérifie par calcul. Supposons donc $n\geq 4$. Pour $m\in\N$, $m\leq n-2$, on majore $V_{m+1}^{-1}\leq\Gamma(1+n/2)\leq(n/2)^{n/2}$ (car $\Gamma(1+x)<x^x$ pour $x>0$). On majore les deux binomiaux dans la définition de $c(n,m)$ par $2^n$. On a alors
$$
n 2^n c(n,m)\leq n 8^n (n/2)^{n/2}.
$$
Par ailleurs, $t\to f(t):=\log(t)+t\log(8)+(t/2)\log(t/2)-2t\log(t)$ est décroissante pour $t\geq 2$ et $f(4)=0$. Donc $n 8^n (n/2)^{n/2}\leq n^{2n}$.
\end{proof}

\begin{inegalite}
  \label{inega:3}
  Pour $n\geq 2$ on a $n^22^{n/2}\leq 3^n$.
\end{inegalite}
\begin{proof}
$t\to f(t):=2\log(t)+(t/2)\log(2)-t\log(3)$ est décroissante pour $t\geq 3$ et $f(2)$, $f(3)<0$. 
\end{proof}

\begin{inegalite}
  \label{inega:4}
  Pour $t\geq1$ on a $(e^{1/2}-1)t - \log(t) - 1/2\geq 0$.
\end{inegalite}
\begin{proof}
$t\to f(t):=(e^{1/2}-1)t - \log(t) - 1/2$ admet un minimum en $t_0=(e^{1/2}-1)^{-1}$ et $f(t_0)=\log(e^{1/2}-1)+1/2>0$.
\end{proof}

\begin{inegalite}
  \label{inega:5}
  Pour $n\geq 2$ on a $\log((n+13)\log (n+1))\geq 2$.
\end{inegalite}
\begin{proof}
On a~: $\log((n+13)\log (n+1))\geq \log(15\log (3))>2$.
\end{proof}

\begin{inegalite}
  \label{inega:6}
  Pour $n\geq 2$ on a $2\cdot1870n^8\log(3\cdot 1870n^8)\leq(4n)^{17/2}$.
\end{inegalite}
\begin{proof}
$t\to f(t):=\log(2\cdot1870 t^8)+\log\log(3\cdot 1870t^8)-\tfrac{17}2\log(4t)$ est décroissante pour $t\geq3$ et $f(2)$, $f(3)<0$.
\end{proof}

\begin{inegalite}
  \label{inega:7}
  Pour $n\geq 2$ on a $n^22^{(n-2)^2+n/2}\leq 2^{n^2}$.
\end{inegalite}
\begin{proof}
$t\to f(t):=2\log(t)+((t-2)^2+t/2)\log(2)-t^2\log(2)$ est décroissante pour $t\geq 2$ et $f(2)<0$. 
\end{proof}

\begin{inegalite}
\label{inega:8}
Pour $i\geq 1$ et $n\geq 2$ posons $V_i:=\pi^{i/2}\Gamma(1+i/2)^{-1}$ et 
$c'(n,m):=2^{m(n-2)+m+1}\tbinom{n}{m+1}^{1/2}\tbinom{n}{m}^{1/2} V_{m+1}^{-1}$. On a alors:
$$
n2^n(935n^6)^{2n}\!\!\max\limits_{0\leq m\leq n-2}c'(n,m)\leq 2^{n^2/2}(2^{12}n^{25/4})^{2n}.
$$
\end{inegalite}
\begin{proof}
Pour $m\in\N$, $m\leq n-2$, on majore $V_{m+1}^{-1}\leq(n/2)^{n/2}$ et les deux binomiaux dans la définition de $c'(n,m)$ par $2^n$ (\cf{} inégalité~\ref{inega:2}). On a alors:
$$
c'(n,m)\leq 2^{(n-2)^2/2+n-1}(n/2)^{n/2}
\leq 2^{n^2/2}(n/2)^{n/2}.
$$
Par ailleurs, 
\begin{multline*}
t\to f(t):=\log(t)+t\log(2)+2t(\log(935)+6\log(t))+(t/2)\log(t/2)\\-2t(12\log(2)+(25/4)\log(t))
\end{multline*}
est décroissante pour $t\geq 1$ et $f(2)<0$. Donc $n2^n(935n^6)^{2n}\cdot (n/2)^{n/2}\leq (2^{12}n^{25/4})^{2n}$.
\end{proof}

\begin{inegalite}
\label{inega:9}
Pour $n\geq 2$ on a: 
$$
\log(2^{27}\cdot 2^{n/2}\cdot n^{29/2})\leq 2^4n
$$
\end{inegalite}
\begin{proof}
En tenant compte que $n\geq 2$ et $n^{1/n} \leq 3^{1/3}$, on a:
$$
\tfrac1n\log(2^{27}\cdot 2^{n/2}\cdot n^{29/2})
=\log(2^{27/n}\cdot 2^{1/2}\cdot n^{29/2n})
\leq \log(2^{14}\cdot 3^{29/6})\leq 2^4.
$$
\end{proof}

\begin{inegalite}
\label{inega:10}
Pour $n\geq 2$ on a: 
$$
(2^{17}n^{33/4})^{2n}\leq (4n)^{17n}
$$
\end{inegalite}
\begin{proof}
$t\to f(t):=34\log(2)+(33/2)\log(t)-17\log(4t)$ est décroissante pour $t>0$ et $f(2)<0$. 
\end{proof}


\begin{thebibliography}{10}

\bibitem{amoroso1999probleme}
F.~Amoroso and S.~David.
\newblock Le probl\`eme de {L}ehmer en dimension sup\'{e}rieure.
\newblock {\em J. Reine Angew. Math.}, 513:145--179, 1999.

\bibitem{amoroso2003minoration}
F.~Amoroso and S.~David.
\newblock Minoration de la hauteur normalisée dans un tore.
\newblock {\em Journal of the Institute of Mathematics of Jussieu},
2(3):335--381, 2003.

\bibitem{amoroso2015overdetermined}
F.~Amoroso, L.~Leroux, and M.~Sombra.
\newblock Overdetermined systems of sparse polynomial equations.
\newblock {\em Foundations of Computational Mathematics}, 15(1):53--87, 2015.

\bibitem{asz2017}
F.~Amoroso, M.~Sombra, and U.~Zannier.
\newblock Unlikely intersections and multiple roots of sparse polynomials.
\newblock {\em Math. Z.}, 287(3--4):1065--1081, 2017.

\bibitem{amoroso2012}
F.~Amoroso and E.~Viada.
\newblock Small points on rational subvarieties of tori.
\newblock {\em Comment. Math. Helv.}, 87(2):355--383, 2012.

\bibitem{bombieri2007anomalous}
E.~Bombieri, D.~Masser, and U.~Zannier.
\newblock Anomalous subvarieties-structure theorems and applications.
\newblock {\em International Mathematics Research Notices}, 2007, 2007.

\bibitem{checcoli2014torsion}
S.~Checcoli, F.~Veneziano, and E.~Viada.
\newblock On torsion anomalous intersections.
\newblock {\em Accounts Lincei-Mathematics and Applications}, 25(1):1--36,
2014.

\bibitem{cohen2013}
H.~Cohen.
\newblock {\em A course in computational algebraic number theory}, volume 138
  of {\em Graduate Texts in Mathematics}.
\newblock Springer-Verlag, Berlin, 1993.

\bibitem{david1999minorations}
S.~David and P.~Philippon.
\newblock Minorations des hauteurs normalis{é}es des sous--vari{é}t{é}s
  des tores.
\newblock {\em Annali della Scuola Normale Superiore di Pisa-Classe di
  Scienze}, 28(3):489--543, 1999.

\bibitem{filaseta2008irreducibility}
M.~Filaseta, A.~Granville, and A.~Schinzel.
\newblock Irreducibility and greatest common divisor algorithms for sparse
  polynomials.
\newblock {\em London Mathematical Society Lecture Note Series}, 352:155, 2008.

\bibitem{habegger2008intersecting}
P.~Habegger.
\newblock Intersecting subvarieties of $\mathbf{G}^n_m$ with algebraic
  subgroups.
\newblock {\em Mathematische Annalen}, 342(2):449--466, 2008.

\bibitem{laurent1984equations}
M.~Laurent.
\newblock \'{E}quations diophantiennes exponentielles.
\newblock {\em Invent. Math.}, 78(2):299--327, 1984.

\bibitem{lenstra1982factoring}
  A. K. Lenstra, H. W. Lenstra and L. Lov\'asz,
  \emph{Factoring polynomials with rational coefficients},
  Math. Ann. 261 (1982), no. 4, 515--534.

\bibitem{leroux2011these}
  L. Leroux,
  \emph{Algorithmes pour les polynômes lacunaires},
  Thèse de doctorat (2011), Université de Caen.

\bibitem{nguyen2009hermite}
  P. Q. Nguyen,
  \emph{Hermite's constant and lattice algorithms},
  in: The LLL Algorithm,
  P. Q. Nguyen and B. Vallée (eds.),
  Springer, Berlin, 2010, 19--69.

\bibitem{philippon-ha1}
P.~Philippon.
\newblock Sur des hauteurs alternatives. {I}.
\newblock {\em Mathematische Annalen}, 289(1):255--283, 1991.

\bibitem{philippon-ha2}
P.~Philippon.
\newblock Sur des hauteurs alternatives. {II}.
\newblock {\em Ann. Inst. Fourier (Grenoble)}, 44(4):1043--1065, 1994.

\bibitem{philippon-ha3}
P.~Philippon.
\newblock Sur des hauteurs alternatives {III}.
\newblock {\em Journal de math{é}matiques pures et appliqu{é}es},
  74(4):345--365, 1995.

\bibitem{philippon-sombra}
P.~Philippon and M.~Sombra.
\newblock Quelques aspects diophantiens des variétés toriques projectives.
\newblock In {\em Diophantine approximation}, volume~16 of {\em Dev. Math.},
  pages 295--338. SpringerWienNewYork, Vienna, 2008.

\bibitem{remond2017generalisations}
G.~R{é}mond.
\newblock G{é}n{é}ralisations du probl{\`e}me de {L}ehmer et applications
  {\`a} la conjecture de {Z}ilber--{P}ink.
\newblock {\em Around the Zilber--Pink Conjecture/Autour de la conjecture de
  Zilber--Pink, Panor. Synth{\`e}ses}, 52:243--284, 2017.

\bibitem{Schinzel1965}
A.~Schinzel.
\newblock On the reducibility of polynomials and in particular of trinomials.
\newblock {\em Acta Arith.}, 11:1--34, 1965.

\bibitem{Schinzel1989}
A.~Schinzel.
\newblock Reducibility of lacunary polynomials. {X}.
\newblock {\em Acta Arith.}, 53(1):47--97, 1989.

\bibitem{schinzel2000polynomials}
A.~Schinzel.
\newblock {\em Polynomials with special regard to reducibility. With an
  appendix by {U}mberto {Z}annier.}, volume~77.
\newblock Cambridge University Press, 2000.

\bibitem{schmidt1996heights}
W.~M. Schmidt.
\newblock Heights of points on subvarieties of $\mathbf{G}^n_m$.
\newblock {\em Number Theory (Paris, 1993--1994). London Mathematical Society
  Lecture Notes Series}, 235:157--187, 1996.

\bibitem{siegel1989lectures}
C.~L. Siegel.
\newblock {\em Lectures on the geometry of numbers}.
\newblock Springer--Verlag, Berlin, 1989.
\newblock Notes by B. Friedman, Rewritten by Komaravolu Chandrasekharan with
  the assistance of Rudolf Suter, With a preface by Chandrasekharan.
  
 \bibitem{storjohann1996hnf} 
G.~Labahn and A.~Storjohann.
\newblock Asymptotically fast computation of Hermite normal forms of integer matrices.
\newblock In {\em  Proc. Internat. Symp. on Symbolic and Algebraic Computation.  ISSAC '96},
259-266 (1996).

\bibitem{Zan2024}
U.~Zannier.
\newblock {\em Lecture notes on {D}iophantine analysis}.
\newblock EMS Series of Lectures in Mathematics. European Mathematical Society
  (EMS), Z\"urich, second edition, 2024.
\newblock With an appendix by Francesco Amoroso.

\bibitem{zhang1991positive}
S.~Zhang.
\newblock Positive line bundles on arithmetic surfaces.
\newblock {\em Ann. of Math. (2)}, 136(3):569--587, 1992.

\bibitem{zhang1995positive}
S.~Zhang.
\newblock Positive line bundles on arithmetic varieties.
\newblock {\em Journal of the American Mathematical Society}, 8(1):187--221,
  1995.

\bibitem{nguyen2009lll}
Nguyen, Phong Q. and Stehlé, Damien.
\newblock An LLL algorithm with quadratic complexity.
\newblock {\em SIAM Journal on Computing}, 39(3):874--903, 2009.
\newblock SIAM.

\end{thebibliography}
\end{document}